\newcommand{\beq}{\begin{equation}}
\newcommand{\eeq}{\end{equation}}
\newcommand{\ba}{\begin{eqnarray}}
\newcommand{\ea}{\end{eqnarray}}
\newcommand{\inj}{\hookrightarrow}
\newcommand{\beg}{\begin{align}}
\newcommand{\uu}{\mathrm}
\newtheorem{theorem}{Theorem}[section]
\newtheorem{proposition}{Proposition}[section]
\newtheorem{lemma}{Lemma}[section]
\newtheorem{corollary}{Corollary}[section]
\title{A generalized Cauchy-Lipschitz theorem
in low regularity spaces.}
\author{\scriptsize{ARNAUD HEIBIG$^1$}
\thanks{Corresponding author.  
E-mail:  arnaud.heibig@insa-lyon.fr; Fax: +33 472438529}}
\date{}
\numberwithin{equation}{section}
\begin{document}
 \maketitle
\begin{flushleft}
 $^1$ 
 Universit\'e de Lyon, 
Institut Camille Jordan et Insa Lyon, B\^at. Leonard de Vinci 
No. 401, 21 Avenue Jean Capelle, F-69621, Villeurbanne, France.
\end{flushleft}
 
\begin{abstract}
We prove well-posedness for some abstract differential 
equations of the first order. Our result covers the usual case
of Lipschitz composition operators. It also contains the case of some
integro-differential operators acting on spaces with low regularity
indexes. The loss of derivatives induced by such operators has to 
be lower than one, in order to be dominated by the first 
order derivative involved in the problem.
\end{abstract} 
\begin{flushleft}
Keywords: Differential equations, irregular coefficients, Poincar\'e inequality, well-posedness,
Cauchy-Lipschitz theorem,  
Besov spaces.
\end{flushleft}
%%%%%%%%%%%%%%%%%%%%%%%%%%%%%%%%%%%%%%%%%%%%%%%%%%%
\section{\large{Introduction.}}
\label{intro}

The aim of this note is to prove an extended Cauchy-Lipschitz
theorem for problems formally written as
$u'=\mathcal{H}_T(u)$ and $u(0) = u_0$. Here, 
$\mathcal{H}_T: U \subset B_{p, q}^s(]0, T[, E)
\rightarrow B_{p, q}^{\sigma}(]0, T[, E)$ is 
a local operator loosing less than one derivative, i.e
$\sigma>s-1$.
 \begin{theorem}\label{terminapok}
Let  
$T> 0$, $R>0$, 
 $1 \leq p, q \leq \infty$,
  $1/p < s< \sigma + 1< 1/p+1$, 
  and $u_0 \in  E$. 
 %Let $\uu{U}$ be an
 %open subset of 
 %$\mathrm{B}_{\mathrm{p}, 
%\mathrm{q}}^{\frac{1}{2}+\alpha}
%(]0, \mathrm{T}[, \uu{E})$
 Assume that  $\mathcal{H}_{T}: 
 B_{p, q}^{s}(]0, T[, E)
 \rightarrow  B_{p, q}^{\sigma}(]0, T[, E)$ is a Lipschitz 
 and local operator.
 Then, there exists $0 < \mathscr{T} \leq T$ such that, 
 for any $0<t_0\leq\mathscr{T}$ 
   the 
 problem: find
 $u\in  B_{p, q}^{s}(]0, t_0[, E)$
 with:
  \begin{equation}\label{seaal}
\begin{cases}
u' = \mathcal{H}_{t_0}
(u)
\\
u(0) = u_0\\
 \end{cases}
\end{equation}
admits exactly one 
distributional solution. This solution
belongs to $B_{p, q}^{\sigma+1}(]0, t_0[, E)$. 
\end{theorem}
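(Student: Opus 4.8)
The plan is to recast \eqref{seaal} as a fixed-point problem for the affine integral map
\[
\Phi(u)(t) = u_0 + \int_0^t \mathcal{H}_{t_0}(u)(\tau)\,\mathrm{d}\tau ,
\]
and to solve it by the Banach contraction principle on a small time interval. Since $s > 1/p$, the embedding $B_{p,q}^{s}(]0,t_0[,E)\hookrightarrow C([0,t_0],E)$ holds, so point evaluation at $0$ is continuous and the affine subset $X=\{v\in B_{p,q}^{s}(]0,t_0[,E):v(0)=u_0\}$ is a nonempty closed (hence complete) subset containing the constant function $u_0$. A distributional solution of \eqref{seaal} in $B_{p,q}^{s}$ is exactly a fixed point of $\Phi$: indeed $\mathcal{H}_{t_0}(u)\in B_{p,q}^{\sigma}\subset L^1_{\mathrm{loc}}$ forces $u'$ to be a function, and continuity of $u$ together with $u(0)=u_0$ yields the integral identity $u=\Phi(u)$; conversely any fixed point solves the equation distributionally and, being of the form $u_0+\int_0^{\cdot}\mathcal{H}_{t_0}(u)$, automatically belongs to $B_{p,q}^{\sigma+1}(]0,t_0[,E)$, which gives the final regularity assertion.

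The two preliminary estimates I would establish first are the engine of the argument. The first is a smoothing bound for the primitive operator $Jg(t)=\int_0^t g$, namely $\|Jg\|_{B_{p,q}^{\sigma+1}(]0,t_0[,E)}\le C_J\,\|g\|_{B_{p,q}^{\sigma}(]0,t_0[,E)}$ with $C_J$ bounded uniformly as $t_0\to 0$; here the admissible range $\sigma<1/p$ (equivalently $\sigma+1<1/p+1$) is what keeps the integration well behaved on the bounded interval. The second is a Poincaré-type inequality: for $w$ with $w(0)=0$,
\[
\|w\|_{B_{p,q}^{s}(]0,t_0[,E)}\le C\,t_0^{\theta}\,\|w\|_{B_{p,q}^{\sigma+1}(]0,t_0[,E)},\qquad \theta=(\sigma+1)-s>0 ,
\]
the positive power $\theta$ being exactly the gap $\sigma+1>s$ furnished by the hypothesis. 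Crucially, the locality of the family $(\mathcal{H}_{t_0})$ lets me realize $\mathcal{H}_{t_0}$ as the restriction of $\mathcal{H}_{T}$ to $]0,t_0[$, so the Lipschitz constant $L$ and the single number $\|\mathcal{H}_{T}(u_0)\|_{B_{p,q}^{\sigma}(]0,T[,E)}$ are uniform in $t_0\le T$ and do not degenerate as $t_0\to0$.

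With these in hand the contraction is immediate. For $u,v\in X$ one has $\Phi(u)-\Phi(v)=J\bigl(\mathcal{H}_{t_0}(u)-\mathcal{H}_{t_0}(v)\bigr)$, which vanishes at $0$, so chaining the Poincaré inequality, the primitive bound and the Lipschitz estimate gives
\[
\|\Phi(u)-\Phi(v)\|_{B_{p,q}^{s}}\le C\,t_0^{\theta}\,C_J\,L\,\|u-v\|_{B_{p,q}^{s}} .
\]
An identical computation with $v$ replaced by the constant $u_0$ bounds $\|\Phi(u)-u_0\|_{B_{p,q}^{s}}$ by $C\,t_0^{\theta}C_J\bigl(\|\mathcal{H}_{T}(u_0)\|_{B_{p,q}^{\sigma}(]0,T[,E)}+LR\bigr)$. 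Choosing $\mathscr{T}$ so small that $C\,\mathscr{T}^{\theta}C_J L<1$ and the self-map bound is $\le R$, the map $\Phi$ is a contraction of the closed ball of radius $R$ about $u_0$ in $X$ for every $t_0\le\mathscr{T}$; Banach's theorem then yields the unique fixed point $u\in B_{p,q}^{\sigma+1}(]0,t_0[,E)$. Uniqueness among \emph{all} distributional solutions, not merely within the ball, follows by applying the same chained estimate to the difference $w=u_1-u_2$ of two solutions: $w(0)=0$ and $\|w\|_{B_{p,q}^{s}}\le C\,t_0^{\theta}C_J L\,\|w\|_{B_{p,q}^{s}}$ with the prefactor $<1$, forcing $w=0$.

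I expect the main obstacle to be the two preliminary estimates themselves, i.e. making Besov analysis on the \emph{bounded} interval $]0,t_0[$ quantitative and uniform in $t_0$. Point evaluation, the primitive bound and the Poincaré inequality all rest on restriction and extension operators for the spaces $B_{p,q}^{\cdot}(]0,t_0[,E)$ whose norms must be controlled independently of $t_0$, and the correct scaling $t_0^{\theta}$ has to be extracted by a dilation argument that tracks how the inhomogeneous Besov norm transforms under $t\mapsto t/t_0$ in the narrow band $1/p-1<\sigma<1/p<s<1/p+1$ dictated by the hypotheses. Once this bounded-interval calculus is secured, the fixed-point mechanism above closes with no further difficulty.
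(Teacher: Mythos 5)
Your overall scheme---recasting the problem as a fixed point of the integral map, extracting a factor $t_0^{\theta}$ with $\theta=\sigma+1-s$ from a uniform Poincar\'e-type inequality, and using locality to restrict $\mathcal{H}_T$ with constants that do not degenerate as $t_0\to 0$---is exactly the strategy of the paper (Picard iteration in part \ref{izno}, your two preliminary estimates being lemma \ref{wos}, theorem \ref{chix} b) and lemma \ref{derche}). However, one step of your argument is false as written, and it sits at the foundation of the construction: the inclusion $B^{\sigma}_{p,q}\subset L^1_{\mathrm{loc}}$, which you invoke both to define $\Phi(u)(t)=u_0+\int_0^t\mathcal{H}_{t_0}(u)(\tau)\,\mathrm{d}\tau$ as a genuine integral and to identify distributional solutions with fixed points. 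The hypotheses only give $-1/p'<\sigma<1/p$, so for every $p>1$ negative values of $\sigma$ are admissible, and these are precisely the cases the theorem is designed for (see the paper's examples, e.g.\ b), where the target space is $B^{-\epsilon/2}_{\infty,\infty}$). For $\sigma<0$, $\mathcal{H}_{t_0}(u)$ is a genuine distribution, $u'$ is not a function, and $\int_0^t\mathcal{H}_{t_0}(u)(\tau)\,\mathrm{d}\tau$ has no pointwise meaning. This is exactly the obstruction that the pairing $\langle\phi\vert_{]0,t[},\mathbf{1}_{]0,t[}\rangle$ of part \ref{nota}, item 9, is built to remove (via zero-extensions and Bony's decomposition), and lemma \ref{wos} is what shows this pairing solves $u'=\phi$, $u(0)=u_0$. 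Moreover the uniformity in $t_0$ of that pairing---hence of your operator $J$---rests on the equicontinuity of the zero-extension family $(P_{0,t})$ in the range $-1/p'<\sigma<1/p$ (lemma \ref{clue}); this is the actual mathematical core of the proof, not a verification that can be deferred.

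Two smaller points. First, locality alone does not make the Lipschitz constant of $\mathcal{H}_{t_0}$ uniform: the norm on $]0,t_0[$ is an infimum over extensions, so one must produce extensions $U,V$ on $]0,T[$ with $\Vert U-V\Vert_{B^{s}_{p,q}(]0,T[,E)}\lesssim\Vert u-v\Vert_{B^{s}_{p,q}(]0,t_0[,E)}$. Here this is cheap because $\mathcal{H}_T$ is defined on the whole space (take $V$ any extension of $v$ and $U=V+W$ with $W$ a near-optimal extension of $u-v$); in the ball-local version (theorem \ref{terminator}) it genuinely requires the equicontinuous extension operators $Q_t$ of lemma \ref{bfmwc}. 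Second, your inequality $\Vert w\Vert_{B^{s}_{p,q}}\le Ct_0^{\theta}\Vert w\Vert_{B^{\sigma+1}_{p,q}}$ cannot be obtained by multiplying by characteristic functions, since $s$ and $\sigma+1$ exceed the multiplier threshold $1/p$; it must be routed through the derivative, i.e.\ split as lemma \ref{derche} plus theorem \ref{chix} b) applied to $w'$. With these repairs---all of which are the paper's own lemmas---your fixed-point and global-uniqueness argument does close.
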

See part $\ref{maly}$ for a definition of a local operator
(assumption L2). In the above statement, the microscopic 
q-index plays practically no role, and similar
statements hold within the functional frame of Sobolev spaces
$W^{s, p}$. Nevertheless, this microscopic index
has some importance when dealing 
with critical spaces. Last, for other extensions of
the ODE theory, see for instance
\cite{bohun}, \cite{BJ} and \cite{DL}.

Notice that in the case of operators acting on smooth functions
spaces,
there's no reason to work within
$B_{p, q}^{\sigma}(]0, T[, E)\inj 
B_{\infty, q}^{\sigma-1/p}(]0, T[, E)$, $\sigma-1/p<0$.
Therefore, significant examples of applications of 
theorem $\ref{terminapok}$ must be searched
among irregular operators. Using Bony's decomposition, 
a simple operator is given by
$\mathcal{H}_T(u) = \psi u$ with $\psi\in
B_{p, q}^{\sigma}(]0, T[, \mathscr{L}(E))$,
$1/p<s<\sigma+1<1/p+1$, $s>1/2$ and $s+\sigma>0$. See section
 $\ref{xample}$ for extensions and other examples.
 
Our proof makes use of Picard fixed point theorem.
Since
%functions of $B_{p, q}^{s}(]0, T[, E)$
% are well defined in zero $(s-1/p >0)$,
%and that 
there's some ``residual'' compactness
 for the above Cauchy problem 
 ($s-\sigma <1 =$ order of derivation), 
 this proof should be routine. Nevertheless, some  technical
difficulties arise, due to
the fractional feature of the problem.
and the low regularity index of the space 
  $B_{p, q}^{\sigma}(]0, T[, E)$. For such a space, multiplication,
  composition and, above all, restriction-extension operations,
  must be handled with care. In particular, all the constants
  of continuity have to be bounded when working on vanishing
  intervals $]0, t[$, $t\rightarrow 0$, even for equivalent
  norms. From this point of view, $B_{p, q}^{-1/p'}(]0, T[, E)$
  seems
  to be a critical space, and most of our proofs
  relies on the following simple fact: the 
  family of zero-extension operators 
$P_{0,t}: B_{p,q}^{\sigma}
(]0, t[, E)\rightarrow B_{p,q}^{\sigma}
(\mathbb{R}, E)$ $(0< t < T)$ is equicontinuous
under the condition 
$1/p-1 <\sigma < 1/p$. 
Equivalently, for such indexes, 
the characteristic function of
an interval is a multiplier for 
$B_{p,q}^{\sigma}
(\mathbb{R}, E)$.

The paper  is organized as follows. In the second section, 
we recall some notations and basic results, 
merely a definition and some 
properties of the Besov spaces,
the definition of the paraproduct
and remainder, and also the definition of some 
duality brackets. The third part is devoted to the statement
of the main theorem. The proof 
of uniform inequalities, essentially a uniform fractional
Poincar\'e's inequality and a fractional integration inequality,
is given in a fourth part.
The firth part contains the proof of the main theorem.
The sixth part concerns some extensions of this theorem.
We state a Peano's type theorem, and also give a global
existence result. Some examples are given in the last part.
%%%%%%%%%%%%%%%%%%%%%%%%%%%%%%%%%%%%%%%%%%%%%%%%%%
%%%%%%%%%%%%%%%%%%%%%%%%%%%%%%%%%%%%%%%%%%
%%%%%%%%%%%%%%%%%%%%%%%%%%%%%%%%%%%%%%
%%%%%%%%%%%%%%%%%%%%%%
%%%%%%%%%%%%%%%%%%%%%
\section{\large{Notations and classical 
results.}}\label{nota}

%%%%%%%%%%%%%%%%%%%%%%%%%%%%%%%%%%%%%%%%%%%%%%%%%%%%%%%%%%%%%%%
%%%%%%%%%%%%%%%%%%%%%%%%%%%%%%%%%%%%%%%%%%%%%%%%%%%%%%%%%%%%%%
%%%%%%%%%%%%%%%%%%%%%%%%%%%%%%%%%%%%%%%%%%%%%%%%%%%%%%%%%%%%%%%%
%%%%%%%%%%%%%%%%%%%%%%%%%%%%%%%%%%%%%%%%%%%%%%%%%%%%%%%%%%%%%%%
\begin{enumerate}
 \item  Throughout this paper $\mathrm{E}$ and $\mathrm{F}$ 
 are two 
Banach spaces, and $\mathscr{L}(E, F)$ 
 is the space of continuous linear applications from 
 $E$ to $F$ . 
In the sequel, we consider
Banach-valued distributions, and generalize, often
 without comments, scalar results to that context.
The reader is refered to \cite{ama1}, 
\cite{ama2}, but also to \cite{trie1}, \cite{trie2}, 
  \cite{trie3}, 
 \cite{BCD}, since the Banach-valued
case follows from the scalar case by few additional arguments. %Except
% for coretractation-retractation matters, which can be 
% found in \cite{ama1}. 
% We avoid 
% duality statements, since the use of brackets
% is enough for our purposes.
%  \item For $\mathrm{V}$ and $\mathrm{W}$ two complex vector
%  spaces, 
 \item For $1 \leq r \leq \infty$, 
  we denote by $r'$ its conjugate exponent
  i.e $r^{-1}+r^{'-1} = 1$. 
\item The symbol $\hookrightarrow$ stands for classical continuous 
embeddings.
 \item Let $1\leq p, q \leq \infty$ and $s\in \mathbb{R}$. The non-homogeneous Besov space 
$B_{p, q}^{s}(\mathbb{R}^{n}, E)$ can be defined 
as the space of tempered distribution $f$
such that (see\cite{BCD})
%\begin{align}\label{hty}
$\Vert f 
\Vert_{B_{p, q}^{s}(\mathbb{R}^{n}, E)}
:=
%\Big[
%\sum_{j \geq -1}
%\big(
\Vert
(2^{js}\Vert \Delta_{j}f 
\Vert_{L^{p}(\mathbb{R}^{n}, E)})_{j\in \mathbb{Z}}
\Vert_{l^q(\mathbb{Z})}
%\big)^{\uu{q}}
%\Big]^{1/\uu{q}}
< \infty$.
%\end{align}. 
In the above writings, the analytic functions 
$\Delta_{j}f $ are defined by the 
standard dyadic procedure (se \cite{BCD} p.99).
In particular, for $j \leq -2$ we have $\Delta_{j}f = 0$.
 For $j\in \mathbb{Z}$, set, for future reference 
$S_{j}f = \sum_{k\leq 
j-1}\Delta_{k}f$. 
\item
For $u
 \in \mathscr{S}'(\mathbb{R}^{n}, \mathscr{L}(E,
F))$, $v
 \in \mathscr{S}'(\mathbb{R}^{n}, E)$, the usual 
 paraproduct (case $E = F = \mathbb{R}$)
 generalizes immediately as:
 $
 \Pi(u, v) = \sum_{j \geq -1}
 S_{j-1}u.\Delta_{j}v
 $
 , and for the remainder:
 $
 \mathfrak{R}(u, v) = 
 \sum_{\vert j-k \vert \leq1}
 \Delta_{j}u.\Delta_{k}v
 $.
 So that formally, we get the Bony decomposition 
 $u.v = \Pi(u, v) + \Pi(v, 
 u) + \mathfrak{R}(u, v)$.
 We shall use freely continuity results
 for the paradaproduct and remainder. See 
 for instance \cite{BCD} pp. 102-104 or \cite{TAY} p.35. 
  \item For $t> 0$, we denote by 
  $\chi_{1/t}\in \mathrm{B}_{m, \infty}^{1/m}
  (\mathbb{R}, \mathbb{R})$ ($1\leq m\leq \infty$) the characteristic
  function of $]0, t[$. We set $\chi = \chi_{1}$. Similarly,
  $\chi_{J}\in\mathrm{B}_{m, \infty}^{1/m}
  (\mathbb{R}, \mathbb{R})$ stands for the characteristic
  function of the interval $J$. Last, 
  $\mathbf{1}_{]0, t[}: ]0, t[\rightarrow\mathbb{R}$
  is the unit function of $]0, t[$.
 \item Let $\Omega \subset \mathbb{R}^{n}$ be a smooth
 domain. The Besov space $B_{p, q}^{s}(\Omega , \mathrm{E})$
 is defined as the restrictions of
  elements of 
 $B^{s}_{p, q}(\mathbb{R}^{n}, \mathrm{E})$ to $\Omega$. The 
 space $B_{p, q}^{s}(\Omega , \mathrm{E})$
 is endowed with the quotient norm
 $\Vert \mathrm{u} 
 \Vert_{B_{p, q}^{s}(\Omega , \mathrm{E})}
 = \mathrm{inf}
\Vert \mathrm{v} 
 \Vert_{B_{p, q}^{s}(\mathbb{R}^{n} , \mathrm{E})} 
 $
 , the inf being taken on all the extensions
 $\uu{v}\in B_{p, q}^{s}(\mathbb{R}^{n} , \mathrm{E})$ of 
 $\uu{u}$. 
 \item Let $\Omega$ be a (smooth)
 domain of $\mathbb{R}^{n}$.
  For any $A \subset
 \mathscr{D}'(\Omega)$, the restriction of  a distribution
 $T\in\mathscr{D}'(\Omega)$ to a domain $\mathscr{O}
  \subset \Omega$ is denoted by 
 $T\vert_{\mathscr{O}}$. The set
 $\uu{A}\vert_{\mathscr{O}}$ is the set 
 of elements $T\vert_{\mathscr{O}}$
 with $T\in\uu{A}$. For $\uu{u}
 \in B_{p, q}^{s}(\Omega , E)$,
 we write $\Vert u 
 \Vert_{B_{p, q}^{s}(\mathscr{O} , E)}
 := 
 \Vert u\vert_{\mathscr{O}}
 \Vert_{B_{p, q}^{s}(\mathscr{O} , E)}$.
\item  We define duality-like pairings. 
The construction is similar 
to the one given in \cite{BCD}, p.70 and p.101 
for the duality bracket. 
We restrict
to the case of an interval $I$,  
 and assume that $1 \leq p, q\leq \infty$,  $-1/p'< \sigma < 1/p$,
 or equivalently $-1/p < -\sigma < 1/p'$. 
It follows that the extension by zero
operators $P_{0, I}$ is continuous in both case: 
\begin{itemize}
 \item $P_{0, I}: B^{-\sigma}_{p', q'}(I,
\mathscr{L}(E, F))
\rightarrow B^{-\sigma}_{p', q'}
(\mathbb{R},
\mathscr{L}(E, F))$
 \item $P_{0, I}: B^{\sigma}_{p, q}(I, E)
\rightarrow B^{\sigma}_{p, q}
(\mathbb{R}, E)$
\end{itemize}
where, as customary,
we have denoted by the same letter
the two operators. Hence, we define the pairing
$<., .>_{\sigma, p, q, I}:B^{-\sigma}_{p', q'}(I,
\mathscr{L}(E, F))
\times B^{\sigma}_{p, q}(I, E)
\rightarrow F$ (or simply, $<., .>_{I}$, 
or $<,>$) by
\begin{align}\label{rouge}
<u, v>_{\sigma, p, q, I} := \sum_{\vert k' - k \vert
\leq 1}\int_{\mathbb{R}}
\Delta_{k}(P_{0, I}(u))(t).\Delta_{k'}(P_{0, I}(v))(t)dt
\end{align}
Hence $<u, v>_{I} = <P_{0, I}(u), P_{0, I}(v)>_{\mathbb{R}}$. 
Function $<., .>_{\sigma, p, q, I}$ "extends"
continuously the pairing
of $L^2(I, \mathscr{L}(E, F))
\times L^2(I, E)
 \rightarrow F$ given by 
$\int_{I} u(t) v(t)
dt$. 

Last, 
we will sometimes write $<v, u>$ in place of $<u, v>$.
\end{enumerate}
%%%%%%%%%%%%%e %%%%%%%%%%%%%%%%%%%%%%%%%%%%%%%%%%%%%%%%%%%%%%%%%%
%%%%%%%%%%%%%%%%%%%%%%%%%%%%%%%%
%%%%%%%%%%%%%%%%%%%%%%%%%%%%%%%%%%%%%%%%%%%%%%%%%%%%%%%
%%%%%%%%%%%%%%%%%%%%%%%%%%%%%%%%%%%%%%%%%%%%%%%%%%%%%%
%%%%%%%%%%%%%%%%%%%%%%%%%%%%%%%%%%%%%%%%%%%%%%%%%%%%%%
%%%%%%%%%%%%%%
%%%%%%%%%%%%%%%%%%%%%%%%%%%%%%%%%%%%%%%%%%%%%%%%%%%
\section{\large{Statement of the theorem.}}
\label{maly}
%%%%%%%%%%%%%%%%%%%%%%%%%%%%%%%%%%%%%%%%%%%%%%%%%%%%%%%
%%%%%%%%%%%%%%%%%%%%%%%%%%%%%%%%%%%%%%%%%%%%%%%%%%%%%%
%%%%%%%%%%%%%%%%%%%%%%%%%%%%%%%%%%%%%%%%%%%%%%%%%%%%%%
%%%%%%%%%%%%%%%%%%%%%%%%%%%%%%%%%%%%%%%%%%%%%%%%%%%%%%
%$\bf{a) \phantom{o}Assumptions.}$\\
In order to state 
the main 
theorem,  
we have to define  
restriction procedures for an
operator denoted below 
by $\mathcal{H}_T$.
Let
$1\leq p, q \leq \infty$, $0< \alpha
< 1$.
For any $t \in \mathbb{R}_+$, 
$\rho> 0$, $u_0 \in E$,  
define $\mathscr{B}_{t, \alpha}
 (u_0, \rho)$ as the open ball
 of 
 $B_{p, q}^{1/p+\alpha}(]0, t[, E)$
 with center $u_0$ and
 radius $\rho$, and set :
\begin{align}
B_{t, \alpha}
(u_0\mathbf{1}_{]0, t[}, \rho)
= \{
u \in
\mathscr{B}_{t, \alpha}
 (u_0\mathbf{1}_{]0, t[}, \rho)
\textrm{ with }
u(0) = u_0
\}
\end{align}
Denote also by
$\bar{B}_{t, \alpha}
(u_0\mathbf{1}_{]0, t[}, \rho)$ 
its closure in $B_{p, q}^{1/p+ \alpha}(]0, t[, E)$ (similar
notation for 
$\mathscr{B}_{t, \alpha}
 (u_0\mathbf{1}_{]0, t[}, \rho)$). Until the
end of the paper, we often and abusively identify
$u_0\mathbf{1}_{]0, t[}$
with $u_0$. We write for instance, 
$B_{t, \alpha}
(u_0, \rho)$
in place of
$B_{t, \alpha}
(u_0\mathbf{1}_{]0, t[}, \rho)$.
In the sequel, we implicitely use the 
following
\begin{lemma}\label{trict}
Let  
$1\leq p, q \leq \infty$, $0< \alpha < 1$, 
$u_0\in E$, $\rho > 0$ and 
$0\leq t_1\leq t_2$. 
Then, $B_{t_1, \alpha}(u_0, \rho)
=
B_{t_2, \alpha}(u_0, \rho)\vert_{]0, t_1[}$ and 
$\mathscr{B}_{t_1, \alpha}(u_0, \rho)
=
\mathscr{B}_{t_2, \alpha}(u_0, \rho)\vert_{]0, t_1[}$. 
\end{lemma}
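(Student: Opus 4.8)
The plan is to read the statement as the conjunction of two set inclusions and to establish each from two elementary properties of the quotient (restriction) norm, together with the fact that, since $1/p+\alpha>1/p$, every element of $B_{p,q}^{1/p+\alpha}$ on an interval has a continuous representative, so the left trace $u\mapsto u(0)$ is well defined and insensitive to the ambient interval. First I would record the two properties. \emph{Monotonicity}: for $]0,t_1[\subseteq]0,t_2[$ and $f\in B_{p,q}^{1/p+\alpha}(]0,t_2[,E)$ one has $\|f|_{]0,t_1[}\|_{B_{p,q}^{1/p+\alpha}(]0,t_1[,E)}\le\|f\|_{B_{p,q}^{1/p+\alpha}(]0,t_2[,E)}$, because any extension to $\mathbb{R}$ of $f$ is also an extension of $f|_{]0,t_1[}$, so the infimum defining the smaller-interval norm runs over a larger competitor set. \emph{Near-optimal extension}: since the interval norm is by definition the infimum of $\|\cdot\|_{B_{p,q}^{1/p+\alpha}(\mathbb{R},E)}$ over extensions, a strict bound $<\rho$ on the interval norm furnishes an actual extension to $\mathbb{R}$ of norm $<\rho$. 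Throughout I identify, as the paper does, $u_0\mathbf{1}_{]0,t[}$ with the constant map $u_0$, which lies in $B_{p,q}^{1/p+\alpha}(]0,t[,E)$ for every $t>0$.

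For the inclusion $B_{t_2,\alpha}(u_0,\rho)|_{]0,t_1[}\subseteq B_{t_1,\alpha}(u_0,\rho)$ I would take $u\in B_{t_2,\alpha}(u_0,\rho)$, so that $\|u-u_0\|_{B_{p,q}^{1/p+\alpha}(]0,t_2[,E)}<\rho$ and $u(0)=u_0$, and apply monotonicity to $f=u-u_0$ to get $\|u|_{]0,t_1[}-u_0\|_{B_{p,q}^{1/p+\alpha}(]0,t_1[,E)}<\rho$. Hence $u|_{]0,t_1[}\in\mathscr{B}_{t_1,\alpha}(u_0,\rho)$, and since the left trace commutes with restriction, $(u|_{]0,t_1[})(0)=u(0)=u_0$, so $u|_{]0,t_1[}\in B_{t_1,\alpha}(u_0,\rho)$. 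The same computation without the trace bookkeeping gives the corresponding inclusion for $\mathscr{B}$.

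The reverse inclusion is the extension step, which I expect to be the genuine obstacle. Given $v\in B_{t_1,\alpha}(u_0,\rho)$, so $\|v-u_0\|_{B_{p,q}^{1/p+\alpha}(]0,t_1[,E)}<\rho$ and $v(0)=u_0$, I would use near-optimal extension to pick $g\in B_{p,q}^{1/p+\alpha}(\mathbb{R},E)$ with $g|_{]0,t_1[}=v-u_0$ and $\|g\|_{B_{p,q}^{1/p+\alpha}(\mathbb{R},E)}<\rho$, then set $w:=g|_{]0,t_2[}+u_0$. By construction $w|_{]0,t_1[}=v$, and by monotonicity of restriction from $\mathbb{R}$ to $]0,t_2[$ one gets $\|w-u_0\|_{B_{p,q}^{1/p+\alpha}(]0,t_2[,E)}=\|g|_{]0,t_2[}\|\le\|g\|_{B_{p,q}^{1/p+\alpha}(\mathbb{R},E)}<\rho$, so $w\in\mathscr{B}_{t_2,\alpha}(u_0,\rho)$. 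The delicate point is then to verify the initial condition: the continuous representative of $g$ has $g(0)=(v-u_0)(0)=0$, this right-limit being the same whether $g$ is read on $]0,t_1[$, on $]0,t_2[$, or on $\mathbb{R}$, whence $w(0)=g(0)+u_0=u_0$ and $w\in B_{t_2,\alpha}(u_0,\rho)$ with $w|_{]0,t_1[}=v$.

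The crux of the whole argument is this last preservation of the Cauchy datum under extension, and it is exactly where the hypothesis $\alpha>0$ enters: it makes $1/p+\alpha>1/p$, hence yields a continuous trace at $0$ that does not see the enclosing interval, so that subtracting the constant $u_0$ simultaneously annihilates the trace and lets the strict radius bound survive the passage from $\mathbb{R}$ to $]0,t_2[$. The $\mathscr{B}$ version is identical with the trace constraint deleted, and the degenerate case $t_1=0$, where $]0,t_1[=\emptyset$, is vacuous.
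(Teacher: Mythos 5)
Your proposal is correct and follows essentially the same route as the paper's own proof: the forward inclusion by monotonicity of the quotient norm, and the reverse inclusion by choosing a near-optimal extension of $u-u_0$ to $\mathbb{R}$ with norm still $<\rho$, adding back the constant $u_0$, and checking that the trace at $0$ is preserved. The only cosmetic difference is that the paper adds an extension $u_{0,*}$ of $u_0$ defined on all of $\mathbb{R}$ rather than adding $u_0$ after restricting to $]0,t_2[$, which changes nothing.
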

\begin{proof} We only prove the first equality. Inclusion 
$
B_{t_2, \alpha}(u_0, \rho)\vert_{]0, t_1[}
\subset
B_{t_1, \alpha}
(u_0, \rho)$ is clear. For the
opposite inclusion, let $u\in 
B_{t_1, \alpha}
(u_0, \rho)$ and let $\epsilon = (\rho -
 \Vert u-u_0
 \Vert_{B_{p, q}^{1/p+\alpha}(]0, t_1[, E)})/2$.
%By definition
%of $\Vert . \Vert_{B_{p, q}^{1/p+\alpha}(]0, %t_1[, \mathrm{E})}$, 
There exists 
$u_* \in B_{p, q}^{1/p+\alpha}(\mathbb{R}, E)$ such that
\begin{align}\label{tct}
u_*\vert_{]0, t_1[} = u
 - u_0
\end{align}
%and also
\begin{align}\label{tkt}
 \Vert u_* \Vert_{\mathrm{B}_{\uu{p}, \uu{q}}^{1/p+\alpha}(]0, t_2[, \mathrm{E})}
 \leq 
 \Vert u_* \Vert_{B_{p, q}^{1/p+\alpha}(\mathbb{R}, E)}
 \leq 
 \Vert u-u_0\Vert_{B_{p, q}^{1/p+\alpha}(]0, t_1[, E)} + \epsilon
 < \rho 
\end{align}
Set $u_{**} = u_*+
u_{0, *}$ with 
$u_{0, *}
\in 
B_{p, q}^{1/p+\alpha}(\mathbb{R}, \mathrm{E})
$ and 
$u_{0, *}\vert_{]0, t_2[}
=
u_0$. By $\ref{tct}$, 
$u_{**}(0)
=
u(0) = u_0$.
Hence, by  $\ref{tkt}$,
$u_{**}\vert_{]0, t_2[}
\in
B_{t_2, \alpha}(u_0, \rho)$.
With $\ref{tct}$, this provides
the lemma.
\end{proof}
Let now $R > 0$, $T>0$, 
$1 \leq p, q \leq \infty$,
 $0< \alpha 
< \eta < 1$, and 
$u_0\in E$
be fixed. 
Let 
 $\mathcal{H}_{T}: \mathscr{B}_{T, \alpha}(u_0, R)
 \rightarrow B_{p, q}^{-1/p'+\eta}(]0, T[, E)$. 
Consider the following properties:
for any $(u, v,
t)
 \in 
 \mathscr{B}_{T, \alpha}(u_0, R)^2 
 \times ]0, T[$, we have
\begin{itemize}
 \item ($L1$) $\phantom{OO} \Vert \mathcal{H}_{T}(u) - 
\mathcal{H}_{T}(v)
\Vert_{B_{p, q}^{-1/p'+\eta}(]0, T[, E)}
\leq \mathfrak{C}_T\Vert
u-v
\Vert_{B_{p, q}^{1/p+
\alpha}(]0, T[, E)}$
 \item ($L2$) $\phantom{OO} \textrm{If } u\vert_{]0, t[}
 =
 v\vert_{]0, t[}, \textrm{ then }
 \mathcal{H}_{T}(u)\vert_{]0, t[}
 =
 \mathcal{H}_{T}(v)\vert_{]0, t[}$
\end{itemize}
When condition 
$L2$ is satisfied, 
we define for any $t\in
]0, T[$ an operator:
$$\mathcal{H}_{t}: \mathscr{B}_{t, \alpha}(u_0, R)
 \rightarrow B_{p, q}^{-1/p'+\eta}(]0, t[, E)$$
 by restriction. 
 It means that, for any 
 $u\in\mathscr{B}_{t, \alpha}(u_0, R)$
 we have:
 \begin{align}\label{lox}
  \mathcal{H}_{t}(u)
 =
 \mathcal{H}_{T}(U)\vert_{]0, t[}
 \end{align}
 with $U\in \mathscr{B}_{T, \alpha}(u_0, R)$
 and $U\vert_{]0, \mathrm{t}[} = u$.
 
 With these notations, the main result is the
 following
 \begin{theorem}\label{terminator}
Let  
$T> 0$, $R>0$, 
 $1 \leq p, q \leq \infty$,
  $0 < \alpha< \eta
 < 1$, and $u_0 \in  E$. 
 %Let $\uu{U}$ be an
 %open subset of 
 %$\mathrm{B}_{\mathrm{p}, 
%\mathrm{q}}^{\frac{1}{2}+\alpha}
%(]0, \mathrm{T}[, \uu{E})$
 Assume that  $\mathcal{H}_{T}: 
 \mathscr{B}_{T, \alpha}(u_0, R)
 \rightarrow  B_{p, q}^{-1/p'+
\eta}(]0, T[, E)$ satisfies
 conditions $(L_1)$ and $(L_2)$. 
 Then, there exists $0<\rho
 < R$ and $0 < \mathscr{T} < T$ such that, 
 for any $0<t_0\leq\mathscr{T}$ 
   the 
 problem: find
 $u\in \mathscr{B}_{t_0, \alpha}
 (u_0, \rho)$
 with:
  \begin{equation}\label{seaal}
\begin{cases}
u' = \mathcal{H}_{t_0}
(u)
\\
u(0) = u_0\\
 \end{cases}
\end{equation}
admits exactly one 
distributional solution. This solution
belongs to $B_{p, q}^{1/p+
\eta}(]0, t_0[, E)$. 
\end{theorem}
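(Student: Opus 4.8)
The plan is to solve (\ref{seaal}) by the Picard--Banach fixed point theorem, after rewriting it as an integral equation. Since $1/p+\alpha>1/p$, we have $B_{p,q}^{1/p+\alpha}(]0,t_0[,E)\inj C([0,t_0],E)$, so the trace at the origin is well defined, and a distribution $u\in B_{p,q}^{1/p+\alpha}(]0,t_0[,E)$ solves (\ref{seaal}) if and only if it satisfies
\[
u=u_0+\int_0^{\cdot}\mathcal{H}_{t_0}(u)(s)\,\ud s=:\Phi(u).
\]
I would set the problem in the closed ball $\bar{B}_{t_0,\alpha}(u_0,\rho)$ equipped with the distance induced by $\Vert\cdot\Vert_{B_{p,q}^{1/p+\alpha}(]0,t_0[,E)}$; by Lemma \ref{trict} and the definition of $\bar B$ this is a complete metric space, every element has trace $u_0$ at the origin, and both $\Phi(u)-u_0$ and the differences $\Phi(u)-\Phi(v)$ therefore vanish at $0$.

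The argument rests on three uniform estimates, whose constants must stay controlled as $t_0\to 0$ (these are the inequalities established in the fourth part). First, a fractional integration inequality: the primitive operator maps $B_{p,q}^{-1/p'+\eta}(]0,t_0[,E)$ into $B_{p,q}^{1/p+\eta}(]0,t_0[,E)$ with a norm $C_{\mathrm{int}}$ independent of $t_0$, and produces trace-zero functions. Secondly, a fractional Poincar\'e inequality: for $w$ with $w(0)=0$, one has $\Vert w\Vert_{B_{p,q}^{1/p+\alpha}(]0,t_0[,E)}\leq C_P(t_0)\Vert w\Vert_{B_{p,q}^{1/p+\eta}(]0,t_0[,E)}$ with $C_P(t_0)\to 0$ as $t_0\to 0$, the gain being possible precisely because $\alpha<\eta$. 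Thirdly, from locality $(L2)$ together with the fact that restriction to a smaller subinterval has norm at most one, the operators $\mathcal{H}_{t_0}$ are well defined by (\ref{lox}) and share a single Lipschitz constant $\mathfrak{C}_{t_0}\leq\mathfrak{C}_T$; moreover $\Vert\mathcal{H}_{t_0}(u_0)\Vert_{B_{p,q}^{-1/p'+\eta}(]0,t_0[,E)}\leq\Vert\mathcal{H}_T(u_0)\Vert_{B_{p,q}^{-1/p'+\eta}(]0,T[,E)}=:M$.

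Chaining the three estimates, for $u,v\in\bar B_{t_0,\alpha}(u_0,\rho)$ I obtain
\[
\Vert\Phi(u)-u_0\Vert_{B_{p,q}^{1/p+\alpha}}\leq C_P(t_0)\,C_{\mathrm{int}}\,(M+\mathfrak{C}_T\rho),\qquad
\Vert\Phi(u)-\Phi(v)\Vert_{B_{p,q}^{1/p+\alpha}}\leq C_P(t_0)\,C_{\mathrm{int}}\,\mathfrak{C}_T\,\Vert u-v\Vert_{B_{p,q}^{1/p+\alpha}}.
\]
I would then fix $\rho:=R/2$ and choose $\mathscr{T}$ so small that $C_P(\mathscr{T})\,C_{\mathrm{int}}\,(M+\mathfrak{C}_T R/2)<\rho$ and $C_P(\mathscr{T})\,C_{\mathrm{int}}\,\mathfrak{C}_T<1$; since these bounds are monotone in $t_0$, a single $\mathscr{T}$ works for all $0<t_0\leq\mathscr{T}$. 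Thus $\Phi$ is a strict contraction of $\bar B_{t_0,\alpha}(u_0,\rho)$ into the open ball $\mathscr{B}_{t_0,\alpha}(u_0,\rho)$, and Banach's theorem yields a unique fixed point $u$ there. Uniqueness of the distributional solution in $\mathscr{B}_{t_0,\alpha}(u_0,\rho)$ follows from the equivalence with the fixed-point equation, and the regularity claim is immediate: $u-u_0=\int_0^{\cdot}\mathcal{H}_{t_0}(u)$ lies in $B_{p,q}^{1/p+\eta}(]0,t_0[,E)$ by the integration inequality, so $u\in B_{p,q}^{1/p+\eta}(]0,t_0[,E)$.

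I expect the genuinely hard part to be not this assembly but the two uniform inequalities it invokes — keeping the integration constant bounded and the Poincar\'e constant \emph{vanishing} as the interval shrinks. This is exactly the delicate point flagged in the introduction: it hinges on the equicontinuity of the zero-extension operators under $1/p-1<\sigma<1/p$, which is what controls the restriction-extension manipulations behind (\ref{lox}), the passage to the integral equation, and the scaling behaviour of the Besov norms on $]0,t_0[$ as $t_0\to 0$.
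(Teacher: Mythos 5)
Your overall strategy (Picard iteration on a small ball, all constants kept uniform as $t_0\to 0$, smallness coming from $\eta>\alpha$) is the paper's strategy, and your first two estimates can be assembled from its Section~\ref{globok}, although not in the form you state them: the paper does not prove a trace-zero ``Poincar\'e'' inequality between the two positive-index spaces $B_{p,q}^{1/p+\eta}$ and $B_{p,q}^{1/p+\alpha}$ with vanishing constant. Instead it applies the uniform Poincar\'e inequality of Lemma~\ref{derche} first, and then extracts the factor $t_0^{\eta-\alpha}$ at the level of the \emph{derivative} via Theorem~\ref{chix}~b), where the indices $-1/p'+\alpha<-1/p'+\eta$ lie in the range $]-1/p',1/p[$ in which multiplication by $\chi_{]0,t[}$ is legitimate; your two-step factorization follows from that chain, so this part is only a reorganization.

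The genuine gap is your third estimate. You claim that locality $(L2)$ together with ``restriction to a smaller subinterval has norm at most one'' gives $\mathfrak{C}_{t_0}\leq\mathfrak{C}_T$. Restriction of norm $\leq 1$ controls only the output side: it gives $\Vert\mathcal{H}_{t_0}(u)-\mathcal{H}_{t_0}(v)\Vert_{B_{p,q}^{-1/p'+\eta}(]0,t_0[,E)}\leq\mathfrak{C}_T\Vert U-V\Vert_{B_{p,q}^{1/p+\alpha}(]0,T[,E)}$ for extensions $U,V$ of $u,v$. To obtain a Lipschitz bound in terms of $\Vert u-v\Vert_{B_{p,q}^{1/p+\alpha}(]0,t_0[,E)}$ you must produce, for each pair $u,v$ on $]0,t_0[$, extensions $U,V$ that (i) satisfy $\Vert U-V\Vert_{B_{p,q}^{1/p+\alpha}(]0,T[,E)}\leq C\Vert u-v\Vert_{B_{p,q}^{1/p+\alpha}(]0,t_0[,E)}$ with $C$ independent of $t_0$, and (ii) \emph{both lie in the domain ball} $\mathscr{B}_{T,\alpha}(u_0,R)$, since $(L1)$ is assumed only there. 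The quotient-norm infimum runs over all extensions of $u-v$, not over differences of paired extensions staying in the ball: adding a near-optimal extension $W$ of $u-v$ to an extension $V$ of $v$ may push $U=V+W$ outside $\mathscr{B}_{T,\alpha}(u_0,R)$, and nothing in your argument prevents this, uniformly in $t_0$. This is precisely the content of the paper's Lemma~\ref{bfmwc}: it builds an equicontinuous family of linear extension operators $Q_t$ (from the integral representation of Lemma~\ref{wos} and the equicontinuity of $P_{0,t}$ of Lemma~\ref{clue}), which maps $B_{t,\alpha}(u_0,R/\Lambda_T)$ into $B_{T,\alpha}(u_0,R)$; the price is that the uniform Lipschitz constant is $\mathfrak{C}_T\Lambda_T$, not $\mathfrak{C}_T$, and that the admissible radius must be shrunk to $R/\Lambda_T$. (Your companion claim $\Vert\mathcal{H}_{t_0}(u_0)\Vert\leq\Vert\mathcal{H}_T(u_0)\Vert$ is fine, because $u_0\mathbf{1}_{]0,T[}$ is itself an admissible extension of $u_0\mathbf{1}_{]0,t_0[}$ sitting at the center of the ball; the difficulty is for general pairs.) Without Lemma~\ref{bfmwc} or an equivalent construction, the contraction estimate at the heart of your proof is unjustified.
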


The proof requires some lemmas which are detailed in the following section.
%%%%%%%%%%%%%%%%%%%%%%%%%%%%%%%%%%%%%%%%%%%%
%%%%%%%%%%%%%%%%%%%%
\section{\large{Uniform estimates.}}
\label{globok}
%%%%%%%%%%%%%%%%%%%%%%%%%%%%%%%%%%%%%%%%%%
%%%%%%%%%%%%%%%%%%%%
%%%%%%%%%%%%%%%%%%%%%
%%%%%%%%%%%%%%%%
%%%%%%%%%%%%%%%%%%%%%%%%%
%%%%%%%%%%%%%%%%%%%%%%%%%%%%%%%%%%%%%%%%%%%%
%%%%%%%%%%%%%%%%%%%%
The main goal of this section is to get uniform (in $t$)
bounds in the required inequalities. 
In the sequel, for $0< t < T$, we denote by
$P_{0,t}: B_{p,q}^{s}
(]0, t[, E)\rightarrow B_{p,q}^{s}
(\mathbb{R}, E)$ the zero-extension operator.
\begin{lemma}\label{clue}
Let $T>0$,
$1 \leq p, q \leq \infty$ and  
$-1/p'< s < 1/p$. The family
$(P_{0,t})_{0< t \leq T}$ is equicontinuous.
\end{lemma}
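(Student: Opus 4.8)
The plan is to reduce the uniform bound to a statement about the characteristic function $\chi_{1/t}$ of $]0,t[$ as a pointwise multiplier, and then to read off the uniformity in $t$ from the scaling behaviour of Besov norms at the critical integrability index. First I would observe that, for a fixed $u\in B_{p,q}^{s}(]0,t[,E)$ and \emph{any} extension $w\in B_{p,q}^{s}(\mathbb{R},E)$ of $u$ (so $w|_{]0,t[}=u$), the zero-extension is exactly the product $P_{0,t}(u)=\chi_{1/t}\,w$ as distributions on $\mathbb{R}$. Hence $\|P_{0,t}(u)\|_{B_{p,q}^{s}(\mathbb{R},E)}\leq M_{s}(t)\,\|w\|_{B_{p,q}^{s}(\mathbb{R},E)}$, where $M_{s}(t)$ denotes the multiplier norm of $\chi_{1/t}$ on $B_{p,q}^{s}(\mathbb{R},E)$. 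Taking the infimum over all extensions $w$ and recalling the quotient definition of the restriction norm from Section~\ref{nota}, this gives $\|P_{0,t}(u)\|_{B_{p,q}^{s}(\mathbb{R},E)}\leq M_{s}(t)\,\|u\|_{B_{p,q}^{s}(]0,t[,E)}$. Everything therefore reduces to proving $\sup_{0<t\leq T}M_{s}(t)<\infty$.

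To bound $M_{s}(t)$, I would estimate $\chi_{1/t}\,g$ for $g\in B_{p,q}^{s}(\mathbb{R},E)$ through Bony's decomposition $\chi_{1/t}\,g=\Pi(\chi_{1/t},g)+\Pi(g,\chi_{1/t})+\mathfrak{R}(\chi_{1/t},g)$, treating the three pieces separately and exploiting that, by the notation of Section~\ref{nota}, $\chi_{1/t}\in\mathrm{B}_{m,\infty}^{1/m}(\mathbb{R},\mathbb{R})$ for \emph{every} $1\leq m\leq\infty$. The first paraproduct $\Pi(\chi_{1/t},g)=\sum_{j}S_{j-1}\chi_{1/t}\,\Delta_{j}g$ uses only $\|\chi_{1/t}\|_{L^{\infty}}=1$ and standard paraproduct continuity, giving a bound by a universal constant times $\|g\|_{B_{p,q}^{s}}$. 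For $\Pi(g,\chi_{1/t})$ I would use $\chi_{1/t}\in\mathrm{B}_{p,\infty}^{1/p}$ together with $\|S_{j-1}g\|_{L^{\infty}}\lesssim 2^{j(1/p-s)}\|g\|_{B_{p,q}^{s}}$ and $\|\Delta_{j}\chi_{1/t}\|_{L^{p}}\lesssim 2^{-j/p}$; the $j$-th block then has $L^{p}$-norm $\lesssim 2^{-js}$ times an $\ell^{q}$-sequence, the estimate being convergent precisely because $s<1/p$, and the output lands directly in $B_{p,q}^{s}$. For the remainder $\mathfrak{R}(\chi_{1/t},g)$ I would pick $m$ with $-s<1/m\leq 1/p'$ (possible exactly because $s>-1/p'$), so the pairing $\mathrm{B}_{m,\infty}^{1/m}\times B_{p,q}^{s}$ produces an element of $B_{p_{3},q}^{s+1/m}$ with $1/p_{3}=1/m+1/p\leq 1$ and $s+1/m>0$; since $p_{3}\leq p$ and $(s+1/m)-1/p_{3}=s-1/p$, the dimension-preserving embedding $B_{p_{3},q}^{s+1/m}\hookrightarrow B_{p,q}^{s}$ returns the required regularity. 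The two-sided condition $-1/p'<s<1/p$ is thus used sharply: $s<1/p$ for the second paraproduct, $s>-1/p'$ for the remainder.

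The crucial point, and the one I expect to be the \emph{main obstacle}, is uniformity in $t$: the naive multiplier norm is not scale invariant in the non-homogeneous setting. The resolution is that all three estimates above involve only two kinds of constants: absolute constants coming from the fixed Littlewood--Paley and paraproduct machinery (independent of $t$), and the norms $\|\chi_{1/t}\|_{L^{\infty}}$, $\|\chi_{1/t}\|_{\mathrm{B}_{p,\infty}^{1/p}}$, $\|\chi_{1/t}\|_{\mathrm{B}_{m,\infty}^{1/m}}$ of the cutoff. The first is identically $1$. For the others I would argue that, for $1/m>0$, one has $\mathrm{B}_{m,\infty}^{1/m}=L^{m}\cap\dot{\mathrm{B}}_{m,\infty}^{1/m}$ with equivalent norms; writing $\chi_{1/t}=\chi(\cdot/t)$ with $\chi=\chi_{1}$, the homogeneous norm $\|\chi_{1/t}\|_{\dot{\mathrm{B}}_{m,\infty}^{1/m}}=\|\chi\|_{\dot{\mathrm{B}}_{m,\infty}^{1/m}}$ is independent of $t$ because $1/m$ is exactly the critical scaling index (the dilation scales the homogeneous norm by $t^{1/m-1/m}=1$), while $\|\chi_{1/t}\|_{L^{m}}=t^{1/m}\leq T^{1/m}$ is bounded on $]0,T]$. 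Hence each cutoff norm is bounded uniformly in $t\in{]0,T]}$, and so is $M_{s}(t)$, which proves the lemma. The passage to the Banach-valued case requires no modification, as already noted in Section~\ref{nota}.
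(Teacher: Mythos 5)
Your proposal is correct, and its skeleton is the same as the paper's: identify the zero extension $P_{0,t}u$ with the product $\chi_{]0,t[}\,\phi$ for an arbitrary extension $\phi$, bound the multiplier norm of $\chi_{]0,t[}$ on $B^{s}_{p,q}(\mathbb{R},E)$ uniformly in $t\in\,]0,T]$, and take the infimum over extensions. The difference lies in how much is proved versus cited, and in how the uniformity in $t$ is extracted. The paper compresses the whole multiplier step into the single inequality $\ref{chaisbeuf}$, whose constant involves only $\Vert\chi\Vert_{B^{1/p'}_{p',\infty}}$ and $\Vert\chi\Vert_{L^{\infty}}$ of the cutoff over the fixed interval $]0,T[$ (the reduction from $\chi_{]0,t[}$ to $\chi_{]0,T[}$ being the scaling fact stated separately as theorem $\ref{chix}$ a) with $\epsilon=0$, proved there via the nonhomogeneous dilation inequality $\ref{hom1}$ quoted from Triebel). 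You instead prove the multiplier bound from scratch by Bony decomposition; your three estimates (first paraproduct needing only $L^{\infty}$, second paraproduct needing $s<1/p$, remainder needing $s>-1/p'$ through the choice $-s<1/m\le 1/p'$) are precisely the mechanism behind the paper's asserted constant, and they mirror the estimates $\ref{driklet}$--$\ref{baugar't}$ that the paper does write out when proving theorem $\ref{chix}$ b). For the uniformity you pass through homogeneous Besov norms, using the splitting $B^{1/m}_{m,\infty}=L^{m}\cap\dot{B}^{1/m}_{m,\infty}$ (for $1/m>0$) and exact scale invariance at the critical index, whereas the paper stays nonhomogeneous and uses the one-sided dilation estimate $\Vert u\circ I_{\lambda}\Vert_{B^{s}_{p,q}}\le C\lambda^{s-1/p}\Vert u\Vert_{B^{s}_{p,q}}$, $\lambda\ge 1$; both yield $\sup_{0<t\le T}\Vert\chi_{]0,t[}\Vert_{B^{1/m}_{m,\infty}}<\infty$, but your route imports the homogeneous--inhomogeneous equivalence, which the paper never needs, while the paper's route reuses machinery ($\ref{hom1}$) it needs anyway for lemma $\ref{derche}$. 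In short: same reduction, self-contained and slightly heavier on your side, terser and better integrated with the rest of the paper on the author's side; your version has the merit of making explicit exactly where each half of the hypothesis $-1/p'<s<1/p$ enters.
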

\begin{proof}
Let $u\in B_{p,q}^{s}
(]0, t[, E)$ and let $\phi\in B_{p,q}^{s}
(\mathbb{R}, E)$ be any extension of $u$. 
Since $-1/p'< s < 1/p$, $\chi_{]0, t[}$ is a multiplier for
$B_{p,q}^{s}
(\mathbb{R}, E)$. Hence, $\phi\chi_{]0, t[}$
is well defined in
$B_{p,q}^{s}
(\mathbb{R}, E)$ and 
$P_{0,t}u = \phi\chi_{]0, t[}$.
Therefore
  \begin{align}
  \Vert
  P_{0,t}u
  \Vert_{B_{p,q}^{s}
(\mathbb{R}, E)}
  \leq 
  C
\big(\Vert
  \chi_{]0, T[}
  \Vert_{B_{p',\infty}^{1/p'}
(\mathbb{R}, E)}+
 \Vert
 \chi_{]0, T[}
  \Vert_{L^{\infty}(\mathbb{R}, E)}
\big)
   \Vert
  \phi
  \Vert_{B_{p,q}^{s}(\mathbb{R}, E)}\label{chaisbeuf}
  \end{align} 
%Inequality $\ref{chaisbeuf}$ provides, 
 % by t
 Taking the inf on all the extensions $\phi\in
  B_{p,q}^{s}(\mathbb{R}, E)$ of $u$, we get
   $$\Vert
  P_{0,t}u
  \Vert_{B_{p,q}^{s}
(\mathbb{R}, E)}
  \leq 
  C_{T}\Vert u
  \Vert_{B_{p,q}^{s}(]0, t[, E)}
  $$
\end{proof}

We deduce from lemma $\ref{clue}$ the integral
formulation of the problem
\begin{lemma}\label{wos}
 Let $T>0$,   
 $1\leq p, q \leq \infty$,
 $1/p < s < 1+1/p$. 
 Let  $\phi\in 
B_{p, q}^{s-1}
 (]0, T[, E)$ and 
 $u_0\in E$. Then, problem: find 
 $u\in B_{p, q}^{s}
 (]0, T[, E)$
 with
 \begin{equation}\label{mapgp}
\begin{cases}
u' = \phi\\
u(0) = u_0\\
 \end{cases}
\end{equation}
admits exactly one solution, given by
\begin{align}\label{butyl}
 u(t) =  
u_0
 + <\phi\vert_{]0, t[}, \mathbf{1}_{]0, t[}>
\end{align}
\end{lemma}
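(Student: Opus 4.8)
The plan is to check directly that the explicit expression \eqref{butyl} solves the problem, to identify its regularity, and to settle uniqueness by a constancy argument. First I would record that the hypothesis $1/p<s<1+1/p$ is equivalent to saying that $\sigma:=s-1$ satisfies $-1/p'<\sigma<1/p$, which is precisely the range in which the pairing $\langle\cdot,\cdot\rangle_{\sigma,p,q,]0,t[}$ of Section~\ref{nota} and the equicontinuous family of zero-extensions $P_{0,t}$ of Lemma~\ref{clue} are available. Since $\mathbf{1}_{]0,t[}$ is the constant $1$, it lies in $B_{p',q'}^{-\sigma}(]0,t[,\mathbb{R})$, so the right-hand side of \eqref{butyl} defines an element of $E$ for every $t\in\,]0,T[$; moreover, writing $\langle\phi|_{]0,t[},\mathbf{1}_{]0,t[}\rangle=\langle P_{0,t}(\phi|_{]0,t[}),\chi_{]0,t[}\rangle_{\mathbb{R}}$ shows that this quantity is the natural substitute for $\int_0^t\phi$ and reduces to it when $\phi$ is smooth.

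Next I would verify $u'=\phi$ in $\mathscr{D}'(]0,T[)$. Put $\tilde\phi=P_{0,T}\phi\in B_{p,q}^{s-1}(\mathbb{R},E)$; then on $]0,T[$ the map $t\mapsto u(t)-u_0$ is the restriction of the primitive $H*\tilde\phi$, where $H=\mathbf{1}_{[0,\infty)}$. Testing against $\psi\in\mathscr{D}(]0,T[)$, the contribution of $u_0$ vanishes because $\int\psi'=0$, and a Fubini exchange — legitimised by the continuity of the pairing and a regularisation of $\psi'$ rather than by any pointwise meaning of $\phi$ — turns $-\int_0^T\langle\phi|_{]0,t[},\mathbf{1}_{]0,t[}\rangle\,\psi'(t)\,dt$ into $\int_0^T\phi\,\psi$. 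Equivalently, $u'=(H*\tilde\phi)'=\delta*\tilde\phi=\tilde\phi$ on $]0,T[$, that is $u'=\phi$. The initial value is then obtained from the uniform estimate $\Vert u(t)-u_0\Vert_E\le C\,\Vert\phi\Vert_{B_{p,q}^{s-1}(]0,T[,E)}\,\Vert\mathbf{1}_{]0,t[}\Vert_{B_{p',q'}^{-\sigma}(]0,t[,\mathbb{R})}$ furnished by Lemma~\ref{clue}, whose right-hand side tends to $0$ as $t\to0^+$, so that $u(0)=u_0$.

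The regularity statement $u\in B_{p,q}^{s}(]0,T[,E)$ is the core of the lemma and the step I expect to be the main obstacle, because the primitive must gain exactly one derivative on a bounded interval while the integration symbol $1/(i\xi)$ is singular at the origin on the whole line. I would proceed in two stages: the same uniform bound as above gives $u\in L^\infty(]0,T[,E)\subset L^p(]0,T[,E)$, and then I would invoke the lifting equivalence $\Vert u\Vert_{B_{p,q}^{s}(]0,T[,E)}\sim\Vert u\Vert_{L^p(]0,T[,E)}+\Vert u'\Vert_{B_{p,q}^{s-1}(]0,T[,E)}$, valid for $s>0$ and transported to the interval through the extension operators, together with the already proved identity $u'=\phi$. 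This localises the whole-line low-frequency difficulty away and yields $u\in B_{p,q}^{s}(]0,T[,E)$.

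Finally, uniqueness follows from $s>1/p$, which gives the embedding $B_{p,q}^{s}(]0,T[,E)\inj C([0,T],E)$, so that the trace at $0$ is meaningful. If $u_1,u_2$ are two solutions, then $w=u_1-u_2$ satisfies $w'=0$ in $\mathscr{D}'(]0,T[)$ and $w(0)=0$; a distribution with vanishing derivative on an interval is a constant, and continuity together with $w(0)=0$ forces $w\equiv0$, so $u_1=u_2$.
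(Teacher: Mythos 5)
Your proposal is correct, and on the one step the paper actually writes out --- that the function $u$ defined by \eqref{butyl} satisfies $u'=\phi$ --- it is the paper's own mechanism in different clothing: your ``Fubini exchange legitimised by the continuity of the pairing and a regularisation'' is exactly the paper's scheme of approximating $\phi$ by smooth $\phi_n$, obtaining the uniform bounds \eqref{pink}--\eqref{floyd} from the continuity of the bracket and the equicontinuity of $(P_{0,t})_{0<t<T}$ (Lemma \ref{clue}), and passing to the limit in $u_n'=\phi_n$; likewise the Heaviside identity $u-u_0=(H*\tilde\phi)\vert_{]0,T[}$ is itself justified by that same approximation, so it is a reformulation rather than an independent route. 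Where you genuinely differ is in scope: the paper stops there (``the rest of the proof is omitted''), while you supply the omitted pieces, each with tools the paper deploys elsewhere. Your initial-value step needs the decay $\Vert\chi_{]0,t[}\Vert_{B^{1/p'-\epsilon}_{p',\infty}(\mathbb{R})}\leq C_T t^{\epsilon}$ of theorem \ref{chix} a) together with the embedding into $B^{1-s}_{p',q'}(\mathbb{R})$ (valid since $1-s<1/p'$; there is no circularity, as theorem \ref{chix} does not rely on Lemma \ref{wos}), and your uniqueness argument is sound because $s>1/p$ gives continuous representatives. The one step to tighten is the regularity: the equivalence $\Vert u\Vert_{B^{s}_{p,q}}\sim\Vert u\Vert_{L^{p}}+\Vert u'\Vert_{B^{s-1}_{p,q}}$ on $]0,T[$ (the paper's \eqref{grd}--\eqref{bartav}, from Triebel) is, as you state it, an a priori estimate for functions already known to lie in $B^{s}_{p,q}(]0,T[,E)$, so it cannot by itself ``yield'' membership of $u$. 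The repair costs nothing given what you have: apply the inequality to the smooth differences $u_n-u_m$, conclude that $(u_n)$ is Cauchy in $B^{s}_{p,q}(]0,T[,E)$, and identify its limit with $u$ through the uniform convergence already established; alternatively, invoke the derivative characterization of $B^{s}_{p,q}(]0,T[)$ in its if-and-only-if form. (Both your argument and the paper's tacitly use density of smooth functions, which fails for $q=\infty$; there one approximates with uniform bounds in a slightly weaker norm.)
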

\begin{proof}
We only prove formula 
$\ref{butyl}$.
For $\phi\in C^{\infty}([0, T], E)$, formula 
 $\ref{butyl}$ reduces
 to the usual
 integral formula. In the case
 $\phi\in 
 B_{p, q}^{s-1}
 (]0, T[, E)$, let 
 $(\phi_n)_{n\in\mathbb{N}}$ be a sequence of 
 $\uu{C}^{\infty}([0, T], \mathrm{E})$ functions 
 converging to  $\phi$ in 
 $B_{p, q}^{s-1}
 (]0, T[, E)$. 
 Set  $u(t) =  
u_0
 + <\phi\vert_{]0, t[}, \mathbf{1}_{]0, t[}>$
 and $ u_n(t) =  
u_0
 + <\phi_n\vert_{]0, t[}, \mathbf{1}_{]0, t[}>$. By the continuity of the bracket
$< , >_{\mathbb{R}}$ and the equicontinuity of $(P_{0,t})_{0<t<T}$ 
(lemma $\ref{clue}$), we get, for any $t\in ]0, T[$ (see part $\ref{nota}$,
9.)
  \begin{align}
 \Vert u(t)- u_n(t) \Vert_E
 &\leq C_T
 \Vert \phi-\phi_n \Vert_{B_{p, q}^{s-1}
 (]0, t[, E)}
 \Vert \mathbf{1}_{]0, t[} \Vert_{B_{p', q'}^{1-s}
 (]0, t[, E)}\label{pink}
  \end{align}
  Therefore
    \begin{align}
     \Vert u- u_n \Vert_{L^{\infty}
 (]0, T[, E)}
 \leq C_T
  \Vert \phi-\phi_n \Vert_{B_{p, q}^{s-1}
 (]0, T[, E)}\label{floyd}
    \end{align}
    so that $u_n\rightarrow u$
    in $L^{\infty}
 (]0, T[, E)$.
    %as $n\rightarrow\infty$. 
    Hence, from 
    $u_n' = \phi_n$ we get $u' = \phi$. The rest of the proof 
    is omitted.
\end{proof}

We need two additional fractional inequalities.
The first one (cf. b) in theorem $\ref{chix}$) replaces the full integration in use in the standard
proof of Cauchy-Lipschitz theorem. 
\begin{theorem}\label{chix}
 a) Let $0 < t < T$, $1 \leq m \leq \infty$ 
 and $0\leq  \epsilon < 1/m$.
 Then 
 \begin{align}\label{west}
 \Vert
 \chi_{1/t}
 \Vert_{\mathrm{B}_{m, \infty}^{1/m
 -\epsilon}
 (\mathbb{R})}
 \leq
C_T
 t^{\epsilon} 
 \end{align}

 b) Let $T> 0$, $R> 0$,  
 $1 \leq p, q \leq \infty$,
 $-1/p'< s < \sigma < 1/p$. For any $u \in B_{p,q}^{\sigma}
(]0, T[, E)$
 and any $t
 \in ]0, T]$ we have:
 $ \Vert u \Vert_{B_{p, q}^{s}
 (]0, t[, E)} 
 \leq C_T
 \Vert u \Vert_{B_{p, q}^{\sigma}
 (]0, t[, E)} t^{\sigma-s}
 $.
\end{theorem}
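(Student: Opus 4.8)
The plan is to prove the two parts separately, each by a dyadic analysis that separates frequencies below and above the threshold $2^{j_0}\simeq 1/t$, so that the small length $t$ of the interval gets converted into the announced power.

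For a), I would use the dilation relation $\chi_{1/t}(\cdot)=\chi(\cdot/t)$, where $\chi=\chi_1$ is the indicator of $]0,1[$. The homogeneous Besov seminorm obeys the one–dimensional scaling identity $|\,g(\cdot/t)\,|_{\dot{\mathrm B}^{\alpha}_{m,\infty}}=t^{1/m-\alpha}\,|g|_{\dot{\mathrm B}^{\alpha}_{m,\infty}}$; with $\alpha=1/m-\epsilon$ this gives $|\chi_{1/t}|_{\dot{\mathrm B}^{1/m-\epsilon}_{m,\infty}}=t^{\epsilon}\,|\chi|_{\dot{\mathrm B}^{1/m-\epsilon}_{m,\infty}}$, and $|\chi|_{\dot{\mathrm B}^{1/m-\epsilon}_{m,\infty}}$ is a finite constant, the indicator of an interval being classically in $\dot{\mathrm B}^{1/m-\epsilon}_{m,\infty}$ for $0\le\epsilon<1/m$. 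Since $1/m-\epsilon>0$, the non–homogeneous norm is equivalent to $\Vert\,\cdot\,\Vert_{L^m}+|\,\cdot\,|_{\dot{\mathrm B}^{1/m-\epsilon}_{m,\infty}}$, and $\Vert\chi_{1/t}\Vert_{L^m}=t^{1/m}=t^{\epsilon}\,t^{1/m-\epsilon}\le T^{1/m-\epsilon}\,t^{\epsilon}$ because $t\le T$. Adding the two contributions yields $\Vert\chi_{1/t}\Vert_{\mathrm B^{1/m-\epsilon}_{m,\infty}(\mathbb R)}\le C_T\,t^{\epsilon}$. Equivalently, one may estimate the blocks directly: for $2^{j}<1/t$ use $\Vert\Delta_j\chi_{1/t}\Vert_{L^m}\le C\,t^{1/m}$, and for $2^{j}\ge 1/t$ use the decay $\Vert\Delta_j\chi_{1/t}\Vert_{L^m}\le C\,2^{-j/m}$ coming from the two jumps of $\chi_{1/t}$; in both cases $2^{j(1/m-\epsilon)}\Vert\Delta_j\chi_{1/t}\Vert_{L^m}\le C\,t^{\epsilon}$.

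For b), the first move is to pass to the whole line through the zero–extension $f:=P_{0,t}u$. Since $-1/p'<\sigma<1/p$, Lemma \ref{clue} gives $\Vert f\Vert_{\mathrm B^{\sigma}_{p,q}(\mathbb R)}\le C_T\Vert u\Vert_{\mathrm B^{\sigma}_{p,q}(]0,t[)}$, while $f$ is an extension of $u$, so $\Vert u\Vert_{\mathrm B^{s}_{p,q}(]0,t[)}\le\Vert f\Vert_{\mathrm B^{s}_{p,q}(\mathbb R)}$. It therefore suffices to prove, for $f$ supported in $[0,t]$, the inequality $\Vert f\Vert_{\mathrm B^{s}_{p,q}(\mathbb R)}\le C_T\,t^{\sigma-s}\Vert f\Vert_{\mathrm B^{\sigma}_{p,q}(\mathbb R)}$. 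Writing $a_j=\Vert\Delta_j f\Vert_{L^p}$ and fixing $j_0$ with $2^{j_0}\simeq 1/t$, I split the $\ell^q$–sum at $j_0$. On the high–frequency range $j\ge j_0$, since $s-\sigma<0$ one factors $2^{js}a_j=2^{j(s-\sigma)}(2^{j\sigma}a_j)\le 2^{j_0(s-\sigma)}(2^{j\sigma}a_j)$, and summing in $\ell^q$ gives $2^{j_0(s-\sigma)}\Vert f\Vert_{\mathrm B^{\sigma}_{p,q}}=t^{\sigma-s}\Vert f\Vert_{\mathrm B^{\sigma}_{p,q}}$, with no use of the support.

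The low–frequency range $j<j_0$ is where the support of $f$ must be exploited, and is the main obstacle, because there the weights run the wrong way ($2^{js}\gg 2^{j\sigma}$ as $j\to-\infty$), so $\mathrm B^{\sigma}$ does not dominate $\mathrm B^{s}$ termwise. The key point is that for $2^{j}<1/t$ the transform $\hat f$ is essentially frozen at $\hat f(0)=\int f$ on the support of the block, whence $\Delta_j f\approx\hat f(0)\,2^{j}\check\varphi(2^{j}\cdot)$ and $a_j\le C\,|\hat f(0)|\,2^{j/p'}$; comparing two low levels gives $a_j\le C\,2^{(j-j_0)/p'}a_{j_0}$, while $a_{j_0}\le 2^{-j_0\sigma}\Vert f\Vert_{\mathrm B^{\sigma}_{p,q}}=t^{\sigma}\Vert f\Vert_{\mathrm B^{\sigma}_{p,q}}$ by $\ell^q\hookrightarrow\ell^\infty$. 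Since $s+1/p'>0$, which is exactly the constraint $s>-1/p'$, the geometric series $\sum_{j<j_0}2^{j(s+1/p')q}$ converges and is controlled by its top term $2^{j_0(s+1/p')q}$; collecting the powers of $2^{j_0}\simeq 1/t$ produces once more the factor $t^{\sigma-s}$. To make the freezing of $\hat f$ rigorous despite $\sigma$ possibly being negative (so that $f$ need not lie in $L^1$ or $L^p$), I would first establish the inequality for $f\in C_c^\infty(]0,t[)$, where $\int f$ and the elementary Young and Bernstein bounds are licit, and then pass to the general compactly supported $f$ by density, the density being available precisely in the multiplier range $-1/p'<\sigma<1/p$. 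Combining the high– and low–frequency bounds and feeding back the two estimates from Lemma \ref{clue} gives the claim.
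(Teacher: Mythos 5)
Your part a) is correct and is essentially the paper's own argument: the paper writes $\chi_{1/t}=\chi_{1/T}(t^{-1}T\,\cdot\,)$ and applies the dilation inequality $\ref{hom1}$, which is exactly your scaling computation; your direct block-by-block variant works too. In part b), the reduction to the whole line through $P_{0,t}$ and lemma $\ref{clue}$, and the high-frequency estimate for $j\geq j_0$, are also correct.

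The gap is in the low-frequency step, which you rightly identify as the heart of the matter. The inequality $a_j\leq C\,\vert\hat f(0)\vert\,2^{j/p'}$ is false as stated: freezing $\hat f$ at the origin produces an error whose $L^p$ size is of order $2^{j/p'}(2^jt)\Vert f\Vert_{L^1}$, and nothing forces this to be dominated by the main term. Concretely, take $f=g'$ with $g\in C_c^\infty(]0,t[)$, $g\not\equiv0$: then $\hat f(0)=0$, so your inequality would give $\Delta_jf\equiv0$ for all $j<j_0$, yet $\Delta_jf=(\Delta_jg)'\not\equiv0$, since the entire function $\hat g$ cannot vanish on a whole annulus. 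The companion lower bound you implicitly use, $\vert\hat f(0)\vert 2^{j_0/p'}\leq C a_{j_0}$, is equally unjustified: at $j=j_0$ one has $2^{j_0}t\approx1$, so the freezing error is at least as large as the main term and no lower bound on the single block $a_{j_0}$ can be extracted (cancellation at scale $j_0$ can make $a_{j_0}$ anomalously small). Hence the chain $a_j\leq C2^{(j-j_0)/p'}a_{j_0}$ is unproven. What Young's inequality honestly gives is $a_j\leq C2^{j/p'}\Vert f\Vert_{L^1}$, but $\Vert f\Vert_{L^1}$ is not controlled by $\Vert f\Vert_{B^{\sigma}_{p,q}}$: it may be infinite when $\sigma\leq0$, and it admits no bound of the form $C_T\,t^{\sigma+1/p'}\Vert f\Vert_{B^{\sigma}_{p,q}}$ without proving something equivalent to the theorem itself. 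Your density step cannot close this hole: to pass to the limit you need the target inequality for smooth $f$ with a constant depending only on $T$ and the $B^{\sigma}_{p,q}$ norm, and that is precisely what the frozen-transform argument fails to deliver (the $L^1$ norms of smooth approximants of a general $f\in B^{\sigma}_{p,q}$ with $\sigma\leq0$ are unbounded); note also that for $q=\infty$ norm-density fails anyway, so one would need a Fatou-type argument.

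For comparison, the paper avoids any block analysis of $f$: it proves the multiplier estimate $\ref{gonzag}$, namely $\Vert\theta\chi_{1/t}\Vert_{B^{s}_{p,q}(\mathbb{R},E)}\leq C\,t^{\sigma-s}\Vert\theta\Vert_{B^{\sigma}_{p,q}(\mathbb{R},E)}$, by Bony's decomposition: the two paraproducts $\ref{driklet}$ and $\ref{raklet}$ use $\sigma<1/p$ and $\epsilon=\sigma-s>0$, the remainder $\ref{baugar't}$ uses $s+1/p'>0$, and in each term the factor $\chi_{1/t}$ contributes the power $t^{\sigma-s}$ through part a) applied with $m=p$ or $m=p'$; part b) then follows by taking infima over extensions. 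Replacing your frozen-transform heuristic by this paraproduct/remainder estimate (or by any rigorous low-frequency bound) would complete your proof; as written, the low-frequency step is not established.
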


\begin{proof}
 a)  
 \begin{align}
 \Vert
 \chi_{1/t}
 \Vert_{B_{m, \infty}^{1/m
 -\epsilon}}
 =  \Vert
 \chi_{1/T}(t^{-1}T .)
 \Vert_{B_{m, \infty}^{1/m
 -\epsilon}}
 \leq C(T/t)^{1/m-\epsilon-1/m}
 \Vert
 \chi_{1/T}
 \Vert_{B_{m, \infty}^{1/m
 -\epsilon}}\label{wurst}
  \end{align}
  (see \cite{trie1} page 206, or $\ref{hom1}$ below)
  
  b) We
first show that, for 
$0< t < T$,
 and for any $\theta\in
 B_{p, q}^{\sigma}(\mathbb{R}, E)$, the following 
 inequality holds true:
  \begin{align}\label{gonzag}
   \Vert \theta\chi_{1/t}
   \Vert_{B_{p, q}^{s}(\mathbb{R}, E)}
 \leq 
 \mathrm{C}
 \Vert \theta
   \Vert_{B_{p, q}^{\sigma}(\mathbb{R}, E)}
 t^{\sigma - s}
  \end{align}
  
 Set $\epsilon = \sigma - s$. Taking in account $-\epsilon < 0$ and 
 $\sigma-1/p < 0$, we get:
 \begin{align}
B_{p, q}^{\sigma}(\mathbb{R}, E)\times
B_{p, q}^{1/p
 -\epsilon}(\mathbb{R})
 \inj
 B_{\infty, q}^{\sigma-1/p}(\mathbb{R}, E)\times
 B_{p, \infty}^{1/p
 -\epsilon}(\mathbb{R})
  \xrightarrow{\Pi}
  B_{p, q}^{\sigma-\epsilon}
(\mathbb{R}, E)\label{driklet}
  \end{align}
   \begin{align} 
   B_{p, q}^{1/p
 -\epsilon}(\mathbb{R})
   \times
     B_{p, q}^{\sigma}(\mathbb{R}, E)
 \inj
 B_{\infty, \infty}^{
 -\epsilon}(\mathbb{R})
 \times
   B_{p, q}^{\sigma}(\mathbb{R}, E) 
 \xrightarrow{\Pi}
 \mathrm{B}_{p, q}^{\sigma-\epsilon}(\mathbb{R}, E)\label{raklet}
   \end{align}
   Since $\sigma+\frac{1}{p'} - \epsilon = s+1/p'> 0$, 
 we have, for the remainder:
     \begin{align} 
      \mathrm{B}_{p', q}^{1/p'
 -\epsilon}(\mathbb{R})
   \times
    \mathrm{B}_{p, q}^{\sigma}(\mathbb{R}, \mathrm{E})
 \xrightarrow{\mathfrak{R}}
  \mathrm{B}_{1, q}^{\sigma+1/p' - \epsilon }
  (\mathbb{R}, \mathrm{E})
   \inj 
    \mathrm{B}_{p, q}^{\sigma-\epsilon}(\mathbb{R}, E)
    \label{baugar't}
   \end{align}
   Notice that $\chi_{1/t}\in 
 B_{m, q}^{1/m-\epsilon}
 (\mathbb{R})$
 for $m = p$ and for $m = p'$.
 Using $\ref{driklet}$, $\ref{raklet}$, $\ref{baugar't}$ and a),
 inequality $\ref{gonzag}$  a follows.
 
 Now, for $u\in B_{p, q}^{\sigma}(]0, T[, E)$
 and 
 $0< t\leq T$,
 denoting by $\theta \in 
 B_{p, q}^{\sigma}(\mathbb{R}, E)$
  any extension of
$u\vert_{]0, t[}\in B_{p, q}^{\sigma}(]0, t[, E)$ and
 invoking $\ref{gonzag}$:
 \begin{align}\label{Khantt}
  \Vert
 u
  \Vert_{B_{p, q}^{s}(]0, t[, E)}
  \leq
   \Vert
  \theta\chi_{1/t}
  \Vert_{B_{p, q}^{s}(]0, t[, E)}
 \leq
 \mathrm{C}
 \Vert \theta
   \Vert_{B_{p, q}^{\sigma}(\mathbb{R}, E)}t^{\sigma-s}
\end{align}
We take the inf on all the extensions 
$\theta$,
and get b).
\end{proof}
The second inequality is a uniform fractional Poincar\'e's
inequality. We give the proof for a restricted range of values
$1/p<s<1$. The general proof $1/p < s < 1+1/p$ relies 
on tedious extension-retractation arguments and is omitted.

In the proof, for any open subset
$\Omega$ of $\mathbb{R}$, we use the function $I_{\lambda, \Omega}: \Omega
 \rightarrow \mathbb{R}$ (or simply 
 $I_{\lambda}$) 
 defined by $I_{\lambda}(t)
 = \lambda t$. Recall
 the following 
inequality (see \cite{trie1}),
valid for any  
$\lambda \geq 1$, $1\leq p, q \leq \infty$,  
$s>0$ and $u\in B_{p, q}^{s}
(\Omega, E)$
\begin{align}
\Vert
uoI_{\lambda}
\Vert_{B_{p, q}^{s}(I_{1/\lambda}(\Omega), E)}
\leq C \lambda^{s-1/p}
\Vert
u
\Vert_{B_{p, q}^{s}(\Omega, E)}
\label{hom1}
\end{align}
By a duality argument, this 
inequality holds true for
$s<0$ and $0< \lambda \leq 1$ (see \cite{BCD} prop. 2.76). 
Therefore
\begin{lemma}\label{derche}
Assume that  $1/p < s <1+1/p$, and let 
$\Omega$ be a bounded interval of $\mathbb{R}$.
There exists $C_{\Omega}>0$ such that for any
$0<\lambda\leq 1$ and any 
$u\in B_{p, q}^{s}
(I_{\lambda}(\Omega), E)$ with $u(0) = 0$, we have
\begin{align}
\Vert
u
\Vert_{B_{p, q}^{s}
(I_{\lambda}(\Omega), E)}
\leq C_{\Omega} \Vert
u'
\Vert_{B_{p, q}^{s-1}
(I_{\lambda}(\Omega), E)}
\label{homma}
\end{align}
\end{lemma}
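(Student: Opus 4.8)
The plan is to reduce the inequality on the moving interval $I_\lambda(\Omega)$ to a \emph{fixed} inequality on $\Omega$ via the rescaling $w := u\circ I_\lambda$, and then to show that the powers of $\lambda$ produced by $(\ref{hom1})$ cancel, so that the final constant depends only on $\Omega$. Since $I_\lambda(t)=\lambda t$, the function $w$ is defined on $\Omega = I_{1/\lambda}(I_\lambda(\Omega))$, satisfies $w(0)=u(0)=0$ (point evaluation being meaningful because $s>1/p$), and $w' = \lambda\,(u'\circ I_\lambda)$.

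First I would transfer the two sides separately. Writing $u = w\circ I_{1/\lambda}$ and applying $(\ref{hom1})$ with dilation factor $1/\lambda\geq 1$ (legitimate since $s>0$) gives
\[
\|u\|_{B_{p,q}^{s}(I_\lambda(\Omega),E)} \leq C\,\lambda^{1/p-s}\,\|w\|_{B_{p,q}^{s}(\Omega,E)}.
\]
For the derivative I use that $s-1<0$ — this is exactly where the restriction $s<1$ enters — together with the dual form of $(\ref{hom1})$ valid for negative smoothness and $0<\lambda\leq 1$. From $w'=\lambda\,(u'\circ I_\lambda)$, applying that dual inequality with factor $\lambda$ on $I_\lambda(\Omega)$ (whose image under $I_{1/\lambda}$ is $\Omega$) yields
\[
\|w'\|_{B_{p,q}^{s-1}(\Omega,E)} \leq C\,\lambda^{s-1/p}\,\|u'\|_{B_{p,q}^{s-1}(I_\lambda(\Omega),E)}.
\]
The exponents $1/p-s$ and $s-1/p$ are opposite, which is the cancellation that will make the constant uniform in $\lambda$.

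Next I would establish the Poincar\'e inequality on the fixed interval $\Omega$: there is $C_\Omega$ with $\|w\|_{B_{p,q}^{s}(\Omega,E)}\leq C_\Omega\|w'\|_{B_{p,q}^{s-1}(\Omega,E)}$ whenever $w(0)=0$. Since this is performed on a single interval, no $\lambda$-dependence appears. The cleanest route is the bounded inverse theorem: the subspace $\{w\in B_{p,q}^{s}(\Omega,E):w(0)=0\}$ is closed (the trace is continuous as $s>1/p$), hence Banach, and differentiation $w\mapsto w'$ is a continuous linear map of it into $B_{p,q}^{s-1}(\Omega,E)$ which, by Lemma $\ref{wos}$ (existence, uniqueness and the integral representation of the primitive, after identifying $\Omega$ with an interval $]0,a[$), is a bijection. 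The open mapping theorem then furnishes the bounded inverse, i.e. the constant $C_\Omega$. Alternatively one argues by contradiction, combining the compact embedding $B_{p,q}^{s}(\Omega,E)\hookrightarrow B_{p,q}^{s'}(\Omega,E)$ for $1/p<s'<s$ with the lifting estimate $\|w\|_{B_{p,q}^{s}}\leq C(\|w\|_{B_{p,q}^{s-1}}+\|w'\|_{B_{p,q}^{s-1}})$ to exclude a normalized sequence with vanishing derivative.

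Finally I would chain the three estimates,
\[
\|u\|_{B_{p,q}^{s}(I_\lambda(\Omega),E)} \leq C\lambda^{1/p-s}\|w\|_{B_{p,q}^{s}(\Omega,E)} \leq C\lambda^{1/p-s}C_\Omega\|w'\|_{B_{p,q}^{s-1}(\Omega,E)} \leq C^2C_\Omega\,\|u'\|_{B_{p,q}^{s-1}(I_\lambda(\Omega),E)},
\]
the powers of $\lambda$ disappearing in the last step. I expect the main obstacle to be precisely the bookkeeping in the scaling step: one must apply $(\ref{hom1})$ in the \emph{expanding} regime for the $B^{s}$-term and its dual in the \emph{contracting} regime for the $B^{s-1}$-term, and it is the sign condition $s-1<0$ that makes the dual inequality available. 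This is exactly why the argument stays clean only for $1/p<s<1$, and why the remaining range $1\leq s<1+1/p$ needs the extension--retraction arguments that the paper chooses to omit.
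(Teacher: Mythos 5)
Your proposal is correct, and its scaling half is essentially the paper's own argument: the paper likewise sets $v = u\circ I_{\lambda}$, applies \ref{hom1} in the expanding regime ($s>0$, dilation $1/\lambda\geq 1$) to $\Vert u\Vert_{B_{p,q}^{s}(I_{\lambda}(\Omega),E)}$ and the dual form of \ref{hom1} (valid because $s-1<0$ and $0<\lambda\leq 1$) to the derivative, and closes via the same cancellation $\lambda^{1/p-s}\cdot\lambda^{s-1/p}=1$; this is also exactly why both you and the paper confine the rescaling step to $1/p<s<1$. Where you genuinely diverge is the fixed-interval case $\lambda=1$. The paper proves it by an explicit chain of inequalities: Triebel's estimate $\Vert u\Vert_{B_{p,q}^{s}(\Omega,E)}\leq C_{\Omega}\bigl(\Vert u'\Vert_{B_{p,q}^{s-1}(\Omega,E)}+\Vert u\Vert_{B_{p,q}^{s-1}(\Omega,E)}\bigr)$, then interpolation to replace $\Vert u\Vert_{B_{p,q}^{s-1}}$ by $\epsilon\Vert u\Vert_{B_{p,q}^{s}}+C_{\epsilon}\Vert u\Vert_{L^{p}}$, and finally the bound $\Vert u\Vert_{L^{p}(\Omega,E)}\leq C\Vert u'\Vert_{B_{p,q}^{s-1}(\Omega,E)}$ extracted from the integral representation of Lemma \ref{wos} (the argument of \ref{pink} and \ref{floyd}, which is where $u(0)=0$ enters). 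You instead get the constant abstractly, from the bounded inverse theorem applied to differentiation on the closed subspace $\{w\in B_{p,q}^{s}(\Omega,E): w(0)=0\}$, with Lemma \ref{wos} supplying bijectivity. That is legitimate and shorter, since Lemma \ref{wos} is available as a black box and the trace is continuous for $s>1/p$; what the paper's route buys is a quantitative, self-contained estimate whose constant is traced through explicit inequalities rather than produced by a Baire-category argument. For the lemma as stated either suffices, and in both proofs the uniformity in $\lambda$ comes entirely from the scaling step.

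One caveat on your fallback argument: the compactness route does not work for a general Banach space $E$. The embedding $B_{p,q}^{s}(\Omega,E)\hookrightarrow B_{p,q}^{s'}(\Omega,E)$, $s'<s$, is not compact when $E$ is infinite dimensional (already the constant functions valued in the unit ball of $E$ form a bounded, non-precompact set), so the normalized-sequence contradiction fails in the vector-valued setting of this paper. Keep the open-mapping version as the actual proof.
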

\begin{proof}
a) Assume that  $1/p < s <1+1/p$. We first prove inequality $\ref{homma}$ 
for $\lambda = 1$. 
Theorem 3.3.5, p. 202 
in \cite{trie1} gives
\begin{align}
\Vert u 
\Vert_{B_{p, q}^{s}
(\Omega, E)}
&\leq
\uu{C}_{\Omega}
\big(
\Vert u'
\Vert_{B_{p, q}^{s-1}
(\Omega, E)}
+ 
\Vert u
\Vert_{B_{p, q}^{s-1}
(\Omega, E)}
\big)\label{grd}
\end{align}
Note that $$
\Vert u
\Vert_{B_{p, q}^{s-1}
(\Omega, E)}\leq 
\epsilon \Vert u
\Vert_{B_{p, q}^{s}
(\Omega, E)}+
C_{\epsilon}
\Vert u
\Vert_{L^p
(\Omega, E)}$$
for $\epsilon>0$ arbitrary small. With $\ref{grd}$, it provides
\begin{align}
\Vert u 
\Vert_{B_{p, q}^{s}
(\Omega, E)}
&\leq
\uu{C}_{\Omega}
\big(
\Vert u'
\Vert_{B_{p, q}^{s-1}
(\Omega, E)}
+ 
\Vert u
\Vert_{L^p
(\Omega, E)}
\big)\label{bartav}
\end{align}
Next, arguing as in 
$\ref{pink}$ and $\ref{floyd}$, we get
\begin{align}
 \Vert u
\Vert_{L^p
(\Omega, E)}
\leq 
C_T 
\Vert u'
\Vert_{B_{p, q}^{s-1}
(\Omega, E)}\label{bartaba}
\end{align}
Therefore, the case $\lambda=1$, follows
from $\ref{bartav}$ and $\ref{bartaba}$.

b) Assume that $1/p<s<1$.
In the general case $0< \lambda \leq 1$, set
$v = uoI_{\lambda}$. We have
\begin{align}
\Vert
u
\Vert_{B_{p, q}^{s}
(I_{\lambda}(\Omega), E)}
& =
\Vert
voI_{1/\lambda}
\Vert_{B_{p, q}^{s}
(I_{\lambda}(\Omega), E)}\nonumber\\
&\leq
C(1/\lambda)^{s-1/p}
\Vert
v
\Vert_{B_{p, q}^{s}
(\Omega, E)}\label{interme}
\end{align}
due to $\ref{hom1}$ since 
$s>0$ and $\lambda^{-1}\geq 1$.
Next, the case 
$\lambda = 1$ provides
\begin{align}
\Vert
v
\Vert_{B_{p, q}^{s}
(\Omega, E)}
&\leq
C_{\Omega}
\Vert
v'
\Vert_{B_{p, q}^{s-1}
(\Omega, E)}\nonumber\\
&= C_{\Omega}\lambda
\Vert
u'oI_{\lambda}
\Vert_{B_{p, q}^{s-1}
(\Omega, E)}
\nonumber\\
& \leq C C_{\Omega}\lambda^{s-1-1/p}
\lambda 
\Vert
u'
\Vert_{B_{p, q}^{s-1}(
I_{\lambda}(\Omega), E)}\label{presq}
\end{align}
by  
inequality $\ref{hom1}$ since $0 < \lambda \leq 1$ and
$s-1 < 0$. 
Inequality $\ref{homma}$ follows
from
$\ref{interme}$ and $\ref{presq}$.
\end{proof}

%%%%%%%%%%%%%%%%%%%%%%%%%%%%%%%%%%%%%%%%%%%%%%%%%%%%%
%%%%%%%%%%%%%%%%%%%%%%%%%%%%%%%%%%%%%%%%%%%%%%%%%%%%%%
\section{\large{Proof of the theorem.}}
\label{izno}
%%%%%%%%%%%%%%%%%%%%%%%%%%%%%%%%%%%%%%%%%%%%%%%%%%%%%%%
%%%%%%%%%%%%%%%%%%%%%%%%%%%%%%%%%%%%%%%%%%%%%%%%%%%%%%
%%%%%%%%%%%%%
Before proceeding, we need a last uniform lemma.

 \begin{lemma}\label{bfmwc}
 Let  $T>0$,   
 $1 \leq p, q < \infty$,
  $0< \alpha 
  < \eta < 1$ , 
 $u_0 \in E$ and 
$R > 0$.
 Let also 
 $\mathcal{H}_{T}: 
 B_{T, \alpha}
 (u_0, R)
 \rightarrow B_{p, q}^{-1/p'+
\eta}(]0, T[, E)$ satisfies properties
 $L1$ and $L2$. Then, there exists
 $\Lambda_T >0$ such that for any $0< t <T$
 and 
 $(u, v)
 \in 
 \mathrm{B}_{t, \alpha}(u_0, \mathrm{R}/
 \Lambda_T)^2$
we have
 $$\Vert \mathcal{H}_{t}(u) - 
 \mathcal{H}_t(v)
\Vert_{B_{p, q}^{-1/p'+
\eta}(]0, t[, E)}
\leq \mathfrak{C}_T\Lambda_T
\Vert
u-v
\Vert_{B_{p, q}^{1/p+
\alpha}(]0, t[, E)}$$
 \end{lemma}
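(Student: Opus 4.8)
The plan is to transfer the global Lipschitz estimate $(L1)$, which holds on $]0,T[$, down to each truncated interval $]0,t[$ with a constant independent of $t$, by choosing with care the extensions through which $\mathcal{H}_t$ is defined via \ref{lox}. Fix $u, v \in B_{t,\alpha}(u_0, R/\Lambda_T)$; in particular $u(0) = v(0) = u_0$ and both $\Vert u - u_0\Vert_{B_{p,q}^{1/p+\alpha}(]0,t[,E)}$ and $\Vert v - u_0\Vert_{B_{p,q}^{1/p+\alpha}(]0,t[,E)}$ are $< R/\Lambda_T$. The number $\Lambda_T$ (a fixed constant, e.g.\ $\Lambda_T = 7$) will be taken large enough to guarantee that the extensions built below stay inside the ball $\mathscr{B}_{T,\alpha}(u_0,R)$ on which $(L1)$ and $(L2)$ are available.

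The key device is to extend $v$ and the difference $w := u - v$ \emph{separately}, rather than extending $u$ and $v$ independently. Since $1/p+\alpha > 1/p$, every element of $B_{p,q}^{1/p+\alpha}(\mathbb{R},E)$ is continuous, so the trace at $0$ is well defined and preserved under restriction. First I would pick, exactly as in the proof of Lemma \ref{trict}, a near-optimal extension $V \in B_{p,q}^{1/p+\alpha}(\mathbb{R},E)$ of $v$ with $V(0) = u_0$ and $\Vert V - u_0\Vert_{B_{p,q}^{1/p+\alpha}(\mathbb{R},E)} \le 2\Vert v - u_0\Vert_{B_{p,q}^{1/p+\alpha}(]0,t[,E)}$. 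Next, since $w(0) = u_0 - u_0 = 0$, I would pick a near-optimal extension $W \in B_{p,q}^{1/p+\alpha}(\mathbb{R},E)$ of $w$, which then automatically satisfies $W(0) = 0$ by trace continuity, together with $\Vert W\Vert_{B_{p,q}^{1/p+\alpha}(\mathbb{R},E)} \le 2\Vert u - v\Vert_{B_{p,q}^{1/p+\alpha}(]0,t[,E)}$. Setting $U := V + W$, we obtain $U|_{]0,t[} = u$ and $U(0) = u_0$; moreover, using $\Vert \cdot\Vert_{]0,T[} \le \Vert \cdot\Vert_{\mathbb{R}}$ and the triangle inequality, both $\Vert V - u_0\Vert_{]0,T[}$ and $\Vert U - u_0\Vert_{]0,T[}$ fall below $R$ once $\Lambda_T$ is large enough, so that $U, V \in B_{T,\alpha}(u_0, R) \subset \mathscr{B}_{T,\alpha}(u_0,R)$.

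With these extensions the estimate is then mechanical. By locality $(L2)$ and \ref{lox}, $\mathcal{H}_t(u) = \mathcal{H}_T(U)|_{]0,t[}$ and $\mathcal{H}_t(v) = \mathcal{H}_T(V)|_{]0,t[}$. Since the restriction operator $B_{p,q}^{-1/p'+\eta}(]0,T[,E) \to B_{p,q}^{-1/p'+\eta}(]0,t[,E)$ has norm at most $1$ for the quotient norms (any $\mathbb{R}$-extension of a function on $]0,T[$ is also one of its restriction to $]0,t[$), and since $U - V = W$, the Lipschitz bound $(L1)$ gives
\begin{align*}
\Vert \mathcal{H}_t(u) - \mathcal{H}_t(v)\Vert_{B_{p,q}^{-1/p'+\eta}(]0,t[,E)}
&\le \Vert \mathcal{H}_T(U) - \mathcal{H}_T(V)\Vert_{B_{p,q}^{-1/p'+\eta}(]0,T[,E)} \\
&\le \mathfrak{C}_T \Vert W\Vert_{B_{p,q}^{1/p+\alpha}(]0,T[,E)}
\le \mathfrak{C}_T \Lambda_T \Vert u - v\Vert_{B_{p,q}^{1/p+\alpha}(]0,t[,E)},
\end{align*}
the last step using $\Vert W\Vert_{]0,T[} \le \Vert W\Vert_{\mathbb{R}} \le 2\Vert u-v\Vert_{]0,t[}$ and $2 \le \Lambda_T$.

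The main obstacle is the containment argument of the second paragraph, namely securing a $t$-independent bound on $\Vert U - V\Vert_{]0,T[}$. A naive scheme — taking near-optimal extensions $U, V$ of $u, v$ \emph{separately} — fails, since two near-optimal extensions of nearby functions need not be close, so $\Vert U - V\Vert_{]0,T[}$ cannot be controlled by $\Vert u - v\Vert_{]0,t[}$. Extending the difference $w$ directly circumvents this, the only cost being the shrinking of the admissible ball from $R$ to $R/\Lambda_T$, which leaves room for $U = V + W$ to remain in $\mathscr{B}_{T,\alpha}(u_0,R)$. Notably, no rescaling of the interval is involved, so the resulting constant $\mathfrak{C}_T \Lambda_T$ does not degenerate as $t \to 0$.
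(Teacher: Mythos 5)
Your proof is correct, but it takes a genuinely different route from the paper's. The paper obtains the uniformity in $t$ by constructing an equicontinuous family of \emph{linear} extension operators, $Q_t(u)(\tau)=\theta(\tau)\big(<P_{0,t}(u'),\chi_{]0,\tau[}>+u(0)\big)$, whose uniform boundedness rests on the integral formula of lemma \ref{wos} and on the equicontinuity of the zero-extension operators $P_{0,t}$ (lemma \ref{clue}); linearity then gives $Q_t(u)-Q_t(v)=Q_t(u-v)$, which is precisely what yields both the containment $Q_t\big(\mathrm{B}_{t,\alpha}(u_0,R/\Lambda_T)\big)\vert_{]0,T[}\subset \mathrm{B}_{T,\alpha}(u_0,R)$ and the bound $\Vert Q_t(u)-Q_t(v)\Vert_{]0,T[}\leq\Lambda_T\Vert u-v\Vert_{]0,t[}$. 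Your device of extending $v$ and the difference $w=u-v$ separately and setting $U=V+W$ is a nonlinear substitute for that linearity: it forces $U-V=W$, hence $\Vert U-V\Vert_{]0,T[}\leq 2\Vert u-v\Vert_{]0,t[}$, using nothing beyond the definition of the quotient norm, the trace continuity of $B^{1/p+\alpha}_{p,q}$ functions (which gives $W(0)=0$ and $U(0)=V(0)=u_0$), and the fact that restriction contracts quotient norms. This is more elementary --- no multiplier or bracket machinery enters --- and it even produces an absolute constant (any $\Lambda_T\geq 7$ works, independently of $T$), whereas the paper's $\Lambda_T$ inherits the $T$-dependence of the equicontinuity constant. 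What the paper's construction buys in exchange is a reusable linear tool: the same operators $Q_t$ are invoked again in the proof of corollary \ref{terminable} b) to make the constant $A_{\epsilon}$ there independent of $t$, a service your ad hoc near-optimal extensions cannot provide. One cosmetic caveat: since constant functions do not belong to $B^{1/p+\alpha}_{p,q}(\mathbb{R},E)$ when $p<\infty$, the quantity $\Vert V-u_0\Vert_{B^{1/p+\alpha}_{p,q}(\mathbb{R},E)}$ should be read, as in the proof of lemma \ref{trict}, as $\Vert v_*\Vert_{B^{1/p+\alpha}_{p,q}(\mathbb{R},E)}$ where $V=v_*+u_{0,*}$ and $u_{0,*}$ is a fixed Besov extension of $u_0\mathbf{1}_{]0,T[}$; only the norm of $V-u_0\mathbf{1}_{]0,T[}$ over $]0,T[$ is actually needed for the containment, and there the estimate is legitimate.
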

\begin{proof}
a) We first define an equicontinuous family 
$(Q_t)_{0< t< T}$ of extension operators.
Let $\theta \in 
\mathscr{D}(\mathbb{R}, \mathbb{R}_+)$
with $\theta (s) = 1$ for
$\vert s \vert \leq 2T$.
For any $0< t <T$ and $u\in B_{p, q}^{1/p+
\alpha}(]0, t[, E)$  set
$$
Q_t(u)(\tau) = 
\theta(\tau)
(<P_{0, t}(u'), \chi_{]0, \tau[}>+u(0)
)$$
$(\tau\in\mathbb{R})$. It follows from lemma $\ref{wos}$ that
$
Q_t: B_{p, q}^{1/p+
\alpha}(]0, t[, E)\rightarrow
B_{p, q}^{1/p+
\alpha}(\mathbb{R}, E)
$ is an extension operator. Moreover, 
due to the continuity of the bracket
$<,>_{\mathbb{R}}$ and the equicontinuity
of $(P_{0, t})_{0<t<T}$ (see lemma
$\ref{clue}$), we have
$$
\Vert
Q_t(u)
\Vert_{B_{p, q}^{1/p+
\alpha}(\mathbb{R}, E)}
\leq
C_T  \big(
\Vert
u'
\Vert_{B_{p, q}^{-1/p'+
\alpha}(]0, t[, E)}+\vert u(0) \vert\big)
\leq
C_T 
\Vert
u
\Vert_{B_{p, q}^{1/p+
\alpha}(]0, t[, E)}
$$
(the last inequality holds with
$\mathbb{R}$ in place of $]0, t[$, 
and follows on $]0, t[$ using by the definition of the norms). 
Hence, $(Q_t)_{0< t< T}$ is equicontinuous. We denote by $\Lambda_T$ a bound
of the norms of the $Q_t$.

b) Let now $u\in \mathrm{B}_{t, \alpha}(u_0, \mathrm{R}/
 \Lambda_T)$. Note that $u_0\mathbf{1}_{]0, T[}
 = Q_t(u_0\mathbf{1}_{]0, t[})\vert_{]0, T[}$.
 With a), it provides $\Vert Q_t(u)- u_0
 \Vert_{B_{p, q}^{\frac{1}{p}+
\alpha}(]0, T[, E)}
\leq \Lambda_T \Vert u - u_0
 \Vert_{B_{p, q}^{\frac{1}{p}+
\alpha}(]0, t[, E)}$,
%due to
% the equicontinuity
%of $(Q_t)_{0<t<T}$. 
hence
$$Q_t\big(\mathrm{B}_{t, \alpha}(u_0, R/
 \Lambda_T)\big\vert_{]0, T[}
 \subset \mathrm{B}_{T, \alpha}(u_0, R)$$
Taking another $v\in \mathrm{B}_{t, \alpha}(u_0, R/
 \Lambda_T)$, we get
\begin{align}
 \Vert \mathcal{H}_{t}(u) - 
 \mathcal{H}_t(v)
\Vert_{B_{p, q}^{-1/p'+
\eta}(]0, t[, E)}
&\leq 
\Vert
\mathcal{H}_{T}\big(Q_t(u)\vert_{]0, T[}\big) - 
 \mathcal{H}_T\big(Q_t(v)\vert_{]0, T[} \big)
\Vert_{B_{p, q}^{-1/p'+
\eta}(]0, T[, E)}\nonumber\\
&\leq 
\mathfrak{C}_T
\Vert
Q_t(u) - 
Q_t(v)
\Vert_{B_{p, q}^{1/p+
\alpha}(]0, T[, E)}\nonumber\\
&\leq
\mathfrak{C}_T\Lambda_T
\Vert
u - 
v
\Vert_{B_{p, q}^{1/p+
\alpha}(]0, t[, E)}\nonumber
\end{align}
\end{proof}
%In the sequel, enlarging if necessary
%some constants, we assume that $\Lambda_T=1$. 
We now prove theorem 
$\ref{terminator}$. 
\begin{proof}\label{fine}
We use Picard fixed
point theorem. 
Let  $0<\rho < R/\Lambda_T$
and let $0< t_0 < T$ to be precised.
Define:
\begin{equation}\label{ghap}
\mathcal{S}_{t_0}:
\begin{cases}
\bar{B}_{t_0,\alpha}
(u_{0}, \rho)
&\rightarrow 
B_{p, q}^{1/p+\alpha}
(]0, t_0[, E)\\
\phantom{VVVVV}\tilde{u} &\mapsto 
u\\
 \end{cases}
\end{equation}
where $u$ is given by equation $\ref{butyl}$
with $\mathcal{H}_{t_0}(\tilde{u})
$
in place of $\phi$ and $s = 1/p+\eta>
1/p+\alpha$.\\
 We prove that 
 $\mathcal{S}_{t_0}(\bar{B}_{t_0,\alpha}
(u_{0}, \rho))\subset\bar{B}_{t_0,\alpha}
(u_{0}, \rho)$ for $t_0>0$ small enough.
 
 Let $\tilde{u}\in
 \bar{B}_{t_0,\alpha}(u_0, \rho)$ and 
 $u = \mathcal{S}_{t_0}(\tilde{u})$.
Appealing to lemmas $\ref{wos}$, $\ref{derche}$
and theorem
$\ref{chix}$ 
for $0< \alpha <\eta$
we have:
 \begin{align}
  & \Vert u - u_0
   \Vert_{B_{p, q}^{1/p+\alpha}
(]0, t_0[)}
   \leq
   C_T
   \Vert u'
  \Vert_{B_{p, q}^{-1/p'+\alpha}
(]0, t_0[)}
   =
    C_T
     \Vert \mathcal{H}_{t_0}
     (\tilde{u})
   \Vert_{B_{p, q}^{-1/p'+\alpha}
(]0, t_0[)}
   \label{Rag}\\
   &\leq 
     C_T
{t_0}^{\eta-\alpha}
     \big(\Vert \mathcal{H}_{t_0}
   (\tilde{u})
   - \mathcal{H}_{t_0}
   (u_0)
   \Vert_{B_{p, q}^{-1/p'+\eta}
(]0, t_0[)}
   +
   \Vert \mathcal{H}_{t_0}
   (u_0)
   \Vert_{B_{p, q}^{-1/p'+\eta}
(]0, t_0[)}\big)\label{commun}
  \end{align}
Due to lemma $\ref{bfmwc}$ and
$ \Vert \tilde{u}
  -u_0
   \Vert_{B_{p, q}^{1/p+\alpha}
(]0, t_0[)} \leq \rho$, 
   we get:
 \begin{align}
   \Vert u - u_0
   \Vert_{B_{p, q}^{1/p+\alpha}
(]0, t_0[)}
   &\leq  
C_T
 {t_0}^{\mathrm{\eta-\alpha}}
    ( 
    \rho
   +
   \Vert \mathcal{H}_{T}
   (u_0)
   \Vert_{B_{p, q}^{-1/p'+\eta}
(]0, T[)})
   \leq \rho\nonumber
   \end{align}
   for $t_0>0$ small enough. It proves the stability.
  The proof that  $\mathcal{S}_{t_0}$
   is a contraction is similar.
\end{proof}
%%%%%%%%%%%%%%%%%%%%%%%%%%
%%%%%%%%%%%%%%%%%%%%%%%%%%%%%%%%%%%%%%%%
%%%%%%%%%%%%%%%%%%%
\section{\large{Generalization.}}
\label{tenssion}
%%%%%%%%%%%%%%%%%%%%%%%%
%%%%%%%%%%%%%%%%%%%%%%%%%
 Theorem $\ref{terminator}$ is not satisfactory for an operator
  $\mathcal{H}_{T}: U
 \subset
 B_{p, q}^{1/p+\alpha}
(]0, T[, E)
\rightarrow  B_{p, q}^{-1/p'+\eta}
(]0, T[, E)$ defined
 on an arbitrary open  set $U$.
We have to identify
 $$I(U)
 :=
 \{
 u_0\in E 
 \textrm{ such that 
 there exists }
 0< T_{u_0}\leq 
 T \textrm{ with }
 u_0\mathbf{1}_{]0, T_{u_0}[}
 \in 
 U\vert_{]0, 
 T_{u_0}[}
 \}
 $$
The following proposition asserts that
$I(U)= U(0)$ (set of initial values
of elements of $U$) and provides
a uniform estimates on the time $T_{u_0}$.
\begin{proposition}\label{trasse}
Let $T> 0$, 
$1\leq p, q \leq \infty$, 
$0< \alpha < 1$, and let
$U$ be an open subset of
$B_{p, q}^{1/p+\alpha}
(]0, T[, E)$. Then
$I(U)= U(0)$. 
Moreover, for any $u_0
\in I(U)$ there exists
$\gamma>0$, $R>0$ and $T_0>0$
such that for any $u_1
\in E$ with $\Vert
u_1-u_0\Vert_{E}
\leq \gamma$, we have 
\begin{align}\label{janvier}
u_1\mathbf{1}_{]0, T_0[}
\in \mathscr{B}_{T_0, \alpha}
(u_0\mathbf{1}_{]0, T_0[},
R)\subset U\vert_{]0, T_0[}
\end{align}
\end{proposition}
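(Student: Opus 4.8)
The plan is to treat the two assertions in turn, deriving the identity $I(U)=U(0)$ from the quantitative ``moreover'' statement. One inclusion is immediate: if $u_0\in I(U)$, then some $w\in U$ satisfies $w|_{]0,T_{u_0}[}=u_0\mathbf{1}_{]0,T_{u_0}[}$, so $w$ is constant near $0$ and $w(0)=u_0$, whence $u_0\in U(0)$; here the trace $w(0)$ is meaningful precisely because $1/p+\alpha>1/p$. The reverse inclusion $U(0)\subset I(U)$ falls out of the quantitative statement by taking $u_1=u_0$, so I would concentrate on the latter.

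Fix $u_0\in U(0)$ and choose $v\in U$ with $v(0)=u_0$. Since $U$ is open in $B_{p,q}^{1/p+\alpha}(]0,T[,E)$, there is $\rho>0$ with $\mathscr{B}_{T,\alpha}(v,2\rho)\subset U$. The idea is to produce, for data $w$ close to the constant $u_0$ on a short interval $]0,T_0[$, a genuine extension $W\in U$ with $W|_{]0,T_0[}=w$. For this I would reuse the equicontinuous family of extension operators $(Q_t)$ built in the proof of Lemma \ref{bfmwc} (its construction needs only the bracket of part 9, hence only $0<\alpha<1$), with uniform bound $\Lambda_T$, and set $W:=v+Q_{T_0}(w-v|_{]0,T_0[})|_{]0,T[}$. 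As $Q_{T_0}$ is an extension operator, $W|_{]0,T_0[}=w$, while for $w\in\mathscr{B}_{T_0,\alpha}(u_0,R)$ one has $\|W-v\|_{B_{p,q}^{1/p+\alpha}(]0,T[)}\le\Lambda_T\|w-v\|_{B_{p,q}^{1/p+\alpha}(]0,T_0[)}\le\Lambda_T\big(R+\|v-u_0\mathbf{1}_{]0,T_0[}\|_{B_{p,q}^{1/p+\alpha}(]0,T_0[)}\big)$. Thus, once the last tail term is smaller than $\rho/(2\Lambda_T)$ and $R:=\rho/(2\Lambda_T)$, one gets $\|W-v\|<\rho<2\rho$, i.e. $W\in U$ and $\mathscr{B}_{T_0,\alpha}(u_0,R)\subset U|_{]0,T_0[}$. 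The first inclusion in \eqref{janvier} is then cheap, since $\|(u_1-u_0)\mathbf{1}_{]0,T_0[}\|_{B_{p,q}^{1/p+\alpha}(]0,T_0[)}\le\|u_1-u_0\|_E\,\|\mathbf{1}_{]0,T[}\|_{B_{p,q}^{1/p+\alpha}(]0,T[)}$, so a small enough $\gamma$ places $u_1\mathbf{1}_{]0,T_0[}$ in the ball.

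Everything therefore hinges on the tail estimate $\|v-u_0\mathbf{1}_{]0,t[}\|_{B_{p,q}^{1/p+\alpha}(]0,t[)}\to 0$ as $t\downarrow 0$, which is the main obstacle. The function $h:=v-u_0\mathbf{1}$ satisfies $h(0)=0$, but its regularity index $1/p+\alpha$ exceeds $1/p$, so Theorem \ref{chix} b) cannot be applied to $h$ directly. I would instead invoke the uniform Poincar\'e inequality (Lemma \ref{derche}, whose constant does not depend on the interval length) to get $\|h\|_{B_{p,q}^{1/p+\alpha}(]0,t[)}\le C\|h'\|_{B_{p,q}^{-1/p'+\alpha}(]0,t[)}$, where the index $\sigma:=-1/p'+\alpha$ now lies in $(-1/p',1/p)$ because $0<\alpha<1$. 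It then remains to prove the general fact that, for a fixed $g\in B_{p,q}^{\sigma}(]0,T[,E)$ with $-1/p'<\sigma<1/p$, the restriction norm $\|g\|_{B_{p,q}^{\sigma}(]0,t[)}$ tends to $0$ as $t\downarrow 0$.

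For this last point I would approximate $g$ by smooth $g_n$ in $B_{p,q}^{\sigma}(]0,T[,E)$; since each $g_n$ also lies in $B_{p,q}^{\sigma_1}(]0,T[,E)$ for any $\sigma<\sigma_1<1/p$, Theorem \ref{chix} b) yields $\|g_n\|_{B_{p,q}^{\sigma}(]0,t[)}\le C\|g_n\|_{B_{p,q}^{\sigma_1}(]0,T[)}\,t^{\sigma_1-\sigma}\to 0$, while the elementary bound $\|g\|_{B_{p,q}^{\sigma}(]0,t[)}\le\|g-g_n\|_{B_{p,q}^{\sigma}(]0,T[)}+\|g_n\|_{B_{p,q}^{\sigma}(]0,t[)}$ makes the convergence uniform in the approximation parameter. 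The genuinely delicate point is exactly this density step at the endpoints $p=\infty$ or $q=\infty$, where smooth functions are not dense; there one must argue within the closure of test functions (or pass through a weaker topology). This is the single part of the argument that demands real care, consistent with the uniform-in-$t$, low-regularity philosophy underlying the paper.
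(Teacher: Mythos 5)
Your proof is correct on exactly the range where the paper's own proof is valid, and it uses the same toolbox --- the uniform Poincar\'e inequality of Lemma \ref{derche}, the gain $t^{\sigma-s}$ of Theorem \ref{chix} b), density of smooth functions, and a uniform extension mechanism --- but assembles it differently in two respects. First, to pass from data on $]0,T_0[$ to elements of $U\subset B_{p,q}^{1/p+\alpha}(]0,T[,E)$, the paper restricts balls (Lemma \ref{trict}, tacitly extended to the non-constant center $u$), whereas you extend functions explicitly via the operators $Q_{T_0}$ from Lemma \ref{bfmwc}; the two devices are interchangeable, yours avoiding the generalized Lemma \ref{trict} at the price of invoking the bracket machinery. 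Second, the paper regularizes first --- it picks $\psi_\epsilon\in C^\infty\cap\mathscr{B}_{T,\alpha}(u,R)$ with $\Vert u-\psi_\epsilon\Vert\leq\epsilon$, recenters it into $\phi_\epsilon$ with $\phi_\epsilon(0)=u(0)$, and applies Poincar\'e and Theorem \ref{chix} b) only to the smooth $\phi_\epsilon$ --- while you apply Poincar\'e directly to $v-u_0\mathbf{1}$ and regularize the derivative $v'\in B_{p,q}^{-1/p'+\alpha}$; the orderings are logically equivalent, and your reduction to the clean claim that $\Vert g\Vert_{B_{p,q}^{\sigma}(]0,t[)}\rightarrow 0$ for fixed $g$ with $-1/p'<\sigma<1/p$ makes the structure more transparent. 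Finally, the density caveat you flag at $p=\infty$ or $q=\infty$ is not a gap of your proof relative to the paper's: the choice of $\psi_\epsilon$ is exactly the same density assumption, used silently. Your caution is in fact vindicated, since for $q=\infty$ the claim, and indeed the proposition itself, fails: take $p=q=\infty$, $v(t)=u_0+t^{\alpha}e$ with $\Vert e\Vert_E=1$, and $U$ a small ball around $v$ in $B_{\infty,\infty}^{\alpha}(]0,T[,E)$; any $W$ with $W\vert_{]0,T_0[}=u_0\mathbf{1}_{]0,T_0[}$ satisfies $\Vert W-v\Vert_{B_{\infty,\infty}^{\alpha}(]0,T[)}\geq\Vert t^{\alpha}e\Vert_{B_{\infty,\infty}^{\alpha}(]0,T_0[)}\geq c>0$ uniformly in $T_0$, so $u_0\in U(0)\setminus I(U)$. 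Thus both proofs really require $q<\infty$ (or data whose derivative lies in the closure of smooth functions), a restriction absent from the statement of the proposition.
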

\begin{proof}
The inclusion 
$I(U)\subset U(0)$
is clear.
%which
%is a direct consequence of the
%definition of $\uu{I}(\uu{U})$. 
We prove
the reverse inclusion -i.e that for any 
 $u\mathbf{1}_{]0, T_0[}\in U$, $u(0)\in 
U\vert_{]0, T_0[}$ for some
 $0<T_0\leq T$- and 
 $\ref{janvier}$ at the same time. 

Let $u\in U$. For $R> 0$
small enough, we have $\mathscr{B}
_{\uu{T}, \alpha}(u, R)
\subset U$. Denote by $C_{\infty}$
a constant of continuity
for the embeddings
$B_{p, q}^{1/p+\alpha}
(]0, T[, E)
\inj 
L^{\infty}(]0, T[, E)$ and 
$E\inj B_{p, q}^{1/p+\alpha}
(]0, T[, E)$, and set
$\epsilon = R/ \big[
4(2C_{\infty}+1)\big]$.
Pick up $\psi_{\epsilon}
\in \mathscr{B}
_{T, \alpha}(u, R)
\cap C^{\infty}([0, T], E)$
with
$\Vert
u-\psi_{\epsilon}
\Vert_{\mathrm{B}_{p, q}^{\frac{1}{p}+\alpha}
(]0, T[, E)}
\leq 
\epsilon$
and define $\phi_{\epsilon} := \psi_{\epsilon}
-\psi_{\epsilon}(0)+u(0)$. We have
$\Vert
u(0)-\psi_{\epsilon}(0)
\Vert_{E}
\leq 
\uu{C}_{\infty}
\Vert u-\psi_{\epsilon}
\Vert_{B_{p, q}^{1/p+\alpha}
(]0, T[, E)}
$. By definition of
$\phi_{\epsilon}$ and 
$\psi_{\epsilon}$, this implies that 
$\Vert u-\phi_{\epsilon}
\Vert_{B_{p, q}^{1/p+\alpha}
(]0, T[, E)}
\leq(C_{\infty}+1)\epsilon$.

Let now $u_1 \in E$
with $\Vert u_1-u(0) \Vert_{E}
\leq \epsilon
%/\Vert\mathbf{1}_{]0, T[}\Vert
%_{B_{p, q}^{1/p+\alpha}
%(]0, T[, E)}
$
, and let
$\alpha< \delta <1$. Since
$\phi_{\epsilon}(0) = 
u(0)$, appealing
to theorem $\ref{chix}$ b), we get,
for any $0< t< T$
\begin{align}
\Vert
u-u_1
\Vert_{B_{p, q}^{1/p+\alpha}
(]0, t[)}
&\leq 
\Vert
u-\phi_{\epsilon}
\Vert_{B_{p, q}^{1/p+\alpha}
(]0, t[)}
+
\Vert
\phi_{\epsilon}-\phi_{\epsilon}(0)
\Vert_{B_{p, q}^{1/p+\alpha}
(]0, t[)}+
\Vert
u(0)-
u_1\Vert_{B_{p, q}^{1/p+\alpha}
(]0, t[)}\nonumber\\
&\leq
(C_{\infty}+1)\epsilon
+
C_{T}
\Vert
\phi_{\epsilon}'
\Vert_{B_{p, q}^{-1/p'+\delta}
(]0, t[)}
t^{\delta-\alpha} 
+C_{\infty}\epsilon\label{sonate}
\end{align}
Set $
T_{0}
=inf
\Big\{
\Big(C_T^{-1}
\Vert
\phi'_{\epsilon}
\Vert_{B_{p, 
q}^{-1/p'+\delta}
(]0, T[)}^{-1}[R/2-(2C_{\infty}+1)\epsilon]\big)
^{1/(\delta - \alpha)}
, T\Big\}$. 
From inequality
$\ref{sonate}$ and lemma $\ref{trict}$
2) we get $u_1
\in \mathscr{B}
_{T_{0}, \alpha}(u, R)
 = \mathscr{B}
_{T, \alpha}(u, R)
\vert_{]0, T_{0}[}
\subset U\vert_{]0, T_{0}[}$,
which proves the proposition.
\end{proof}
Due to corollary $\ref{terminator}$ , lemma
$\ref{trasse}$ (and 
uniform estimates of the time of existence in the above proofs), we get corollary 
$\ref{terminable}$ below. We extend
without comments the range of indexes,
since the proof is easier for spaces
$B_{p, q}^{s}$ of positive 
differential dimension $s-1/p\geq 0$.
We also give a statement in the case of 
a continuous operator $\mathcal{H}_{T}$.
\begin{corollary}\label{terminable}
Let  
%$T> 0$, $R>0$, 
% $1 \leq p, q \leq \infty$,
%  $0 < \alpha< \eta
% < 1$, and let $U$ be an
% open subset of 
% $B_{p, q}^{1/p+\alpha}
%(]0, T[, E))$. Let $u_0 \in U(0)$.
$T> 0$, $R>0$, 
 $1 \leq p, q \leq \infty$,
  $\eta >\alpha> 0$, and let $U$ be an
 open subset of 
 $B_{p, q}^{1/p+\alpha}
(]0, T[, E))$. Let $u_0 \in U(0)$.\\

a)
 Assume that  $\mathcal{H}_{T}: 
 U
 \rightarrow  B_{p, q}^{-1/p'+
\eta}(]0, T[, E)$ satisfies
 condition $L_1$ and $L_2$. 
 Then, there exists $0<\rho_1 \leq \rho_2
 < R$ and $0 < \mathscr{T} < T$ such that, 
 for any $0<t_0\leq\mathscr{T}$ and any
  $u_1 \in E$ with 
  $\Vert u_1 - u_0 \Vert \leq \rho_1$, 
   the 
 problem: find
 $u\in \mathscr{B}_{t_0, \alpha}
 (u_0, \rho_2)$
 with:
  \begin{equation}\label{sebal}
\begin{cases}
u' = \mathcal{H}_{\mathrm{t}_0}
(u)
\\
u(0) = u_1\\
 \end{cases}
\end{equation}
admits exactly one 
solution. This solution
belongs to $B_{p, q}^{1/p+
\eta}(]0, t_0[, E)$.\\

b) Assume that 
$U
 = B_{p, q}^{1/p+
\alpha}(]0, T[, E)$
in a). Then, the solution exists
on the whole interval $]0, T[$.

c) Same assumptions as in a) except that $E$ is finite dimensional and
$\mathcal{H}$ is not Lipschitzian but
continuous. In the
conclusions of a), uniqueness is lost.
\end{corollary}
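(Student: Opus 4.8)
The plan is to derive all three parts from the local well-posedness of Theorem~\ref{terminator}, the localization Proposition~\ref{trasse}, and the uniform (in $t$) estimates of Section~\ref{globok}.

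\textbf{Part a).} First I would invoke Proposition~\ref{trasse}: since $u_0\in U(0)=I(U)$, it yields $\gamma,R_0,T_0>0$ such that every $u_1\in E$ with $\Vert u_1-u_0\Vert_E\le\gamma$ satisfies $u_1\mathbf{1}_{]0,T_0[}\in\mathscr{B}_{T_0,\alpha}(u_0,R_0)\subset U\vert_{]0,T_0[}$. On this ball the restricted operator $\mathcal{H}_{T_0}$ is well defined and still obeys $(L1)$--$(L2)$: locality is inherited, and the uniform Lipschitz bound is exactly Lemma~\ref{bfmwc}. We are thus in the situation of Theorem~\ref{terminator}, save that the prescribed initial value $u_1$ need not be the centre $u_0$ of the ball. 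I would rerun the Picard scheme with the modified map $\mathcal{S}_{t_0}(\tilde u)(t)=u_1+<\mathcal{H}_{t_0}(\tilde u)\vert_{]0,t[},\mathbf{1}_{]0,t[}>$ (Lemma~\ref{wos}). The only genuinely new point is the stability estimate: splitting $u-u_0=(u_1-u_0)\mathbf{1}_{]0,t_0[}+<\mathcal{H}_{t_0}(\tilde u),\mathbf{1}_{]0,t_0[}>$, one bounds the first term by $C_\infty\rho_1$ through $E\inj B_{p,q}^{1/p+\alpha}(]0,t_0[,E)$ (with $C_\infty$ uniform in $t_0$, as restriction does not increase the quotient norm) and the second exactly as in \eqref{commun}, then chooses $\rho_1$ with $C_\infty\rho_1\le\rho_2/2$ and $t_0$ so small that the remaining $C_T t_0^{\eta-\alpha}(\rho_2+\Vert\mathcal{H}_{T}(u_0)\Vert)$ term is $\le\rho_2/2$. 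The contraction estimate is untouched, since the $u_1$-terms cancel in $\mathcal{S}_{t_0}(\tilde u)-\mathcal{S}_{t_0}(\tilde v)$. Picard then gives the unique solution, and $u'=\mathcal{H}_{t_0}(u)\in B_{p,q}^{-1/p'+\eta}$ forces $u\in B_{p,q}^{1/p+\eta}$ by Lemma~\ref{wos}.

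\textbf{Part b).} When $U$ is the whole space the domain constraint disappears, so a) applies with an arbitrary initial value and, crucially, the resulting existence time is bounded below by a quantity depending on the data only through $\Vert\mathcal{H}_T(w\mathbf{1}_{]0,T[})\Vert$ for the relevant constant $w\in E$, itself controlled by $\Vert w\Vert_E$ via $(L1)$. I would argue by continuation. Let $t^\ast$ be the supremum of existence times; suppose $t^\ast<T$. Since $1/p+\eta>1/p$ we have $B_{p,q}^{1/p+\eta}(]0,t^\ast[,E)\inj L^\infty$, so the solution admits a limit $u^\ast:=u(t^\ast)\in E$; restarting from $u^\ast$ and gluing by locality $(L2)$ extends the solution past $t^\ast$, contradicting maximality, whence $t^\ast=T$. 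I expect the gluing to be the main obstacle: as $\mathcal{H}_T$ is anchored at the origin and carries no time-translation structure, the continuation must be phrased as solving, on $]0,t^\ast+\delta[$, for an extension $w$ of the already-known solution with $w'=\mathcal{H}_{t^\ast+\delta}(w)$; locality guarantees the equation holds automatically on $]0,t^\ast[$, and the uniform lower bound on the step size ensures that finitely many such steps reach $T$.

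\textbf{Part c).} For finite-dimensional $E$ and a merely continuous $\mathcal{H}$ I would replace Picard by Schauder's fixed point theorem on the nonempty closed bounded convex set $\bar B_{t_0,\alpha}(u_0,\rho)$ (the affine constraint $u(0)=u_0$ preserves convexity). The self-map property of $\mathcal{S}_{t_0}$ follows, as in a), from the $t_0^{\eta-\alpha}$ gain of Theorem~\ref{chix} b) together with the boundedness of $\mathcal{H}$ on the ball, and continuity of $\mathcal{S}_{t_0}$ from that of $\mathcal{H}$ and of the bracket $<\cdot,\cdot>$. The compactness needed to run Schauder is precisely the ``residual compactness'' noted in the introduction: $\mathcal{S}_{t_0}$ takes values in $B_{p,q}^{1/p+\eta}(]0,t_0[,E)$, and because $\eta>\alpha$, the interval is bounded and $E$ is finite-dimensional, the embedding $B_{p,q}^{1/p+\eta}\inj B_{p,q}^{1/p+\alpha}$ is compact; hence $\mathcal{S}_{t_0}$ is compact. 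Schauder then produces a fixed point, i.e. a solution, while uniqueness is lost because the contraction estimate of a) is no longer available.
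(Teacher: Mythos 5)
Your parts a) and c) are consistent with what the paper intends (it proves neither in detail): a) follows from Proposition \ref{trasse} plus a rerun of the Picard scheme of Theorem \ref{terminator} with the bracket anchored at $u_1$, exactly as you set it up, and c) from Schauder via the compact embedding $B_{p, q}^{1/p+\eta}(]0, t_0[, E)\inj B_{p, q}^{1/p+\alpha}(]0, t_0[, E)$ for finite-dimensional $E$. The problem is part b), which is the only part the paper actually proves, and where your route has a genuine gap.

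The paper does not argue by continuation. It reduces global existence, by ``standard arguments'', to an \emph{a priori} bound in $B_{p, q}^{1/p+\eta}$ for a local solution on $[0, t_0[$, and the substance of its proof is that bound: using Lemma \ref{derche}, the global Lipschitz property $L1$ (which gives linear growth since $U$ is the whole space), an interpolation $\Vert u-u_0\Vert_{B_{p, q}^{1/p+\alpha}}\leq \epsilon\Vert u-u_0\Vert_{B_{p, q}^{1/p+\eta}}+A_{\epsilon}\Vert u-u_0\Vert_{L^p}$ with the $\epsilon$-term absorbed into the left-hand side, and the embedding into $L^{\infty}$, it obtains $\Vert u(t)\Vert_E\leq C_T\big(\Vert u\Vert_{L^p(]0, t[)}+1\big)$ (inequalities $\ref{arnaurd}$--$\ref{tonyo}$), and then Gronwall applied to $z(t)=\Vert u\Vert_{L^p(]0, t[)}$ bounds the solution up to $t_0$. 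Your proposal omits precisely this estimate, and that omission is fatal as written: your ``uniform lower bound on the step size'' does not exist a priori. By your own accounting, the existence time obtained from restart data $u^*=u(t_k^*)$ degrades as $\Vert\mathcal{H}_T(u^*\mathbf{1}_{]0,T[})\Vert$ grows, hence (via $L1$) as $\Vert u^*\Vert_E$ grows; without the Gronwall bound nothing prevents $\Vert u(t_k^*)\Vert_E\rightarrow\infty$ along the restarts, with step sizes shrinking summably and the continuation stalling at some $t^*<T$ --- i.e. finite-time blow-up, which is exactly what has to be excluded. There is a second, subsidiary, gap in the gluing step you flag but do not resolve: the paper's contraction machinery (Lemma \ref{derche} and Theorem \ref{chix} b)) is anchored at $t=0$, so for the extension problem on $]0, t^*+\delta[$ it yields a gain $(t^*+\delta)^{\eta-\alpha}$, not $\delta^{\eta-\alpha}$; making the extension map contract would require exploiting $L2$ (the difference of two extensions of the known solution, and hence of their images under $\mathcal{H}$, vanishes on $]0, t^*[$) together with translated versions of Theorem \ref{chix} b) and of the Poincar\'e inequality, none of which is established in the paper. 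The cleanest repair is to drop the continuation scheme and prove the a priori bound as the paper does.
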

\begin{proof}
 We only prove b). Appealing to standard arguments, 
 it's enough to get a priori bounds in
$B_{p, q}^{1/p+
\eta}$ for a local solution $u$ defined
 on an interval $[0, t_0[$. For $0< t< t_0$, 
 arguing as in the proof of theorem 
$\ref{terminator}$, we get, 
   for any $0<t<t_0$
 \begin{align}
  \Vert u - u_0
   \Vert_{B_{p, q}^{1/p+\eta}
(]0, t[)} 
&\leq 
     C_T \Vert u ' 
     \Vert_{B_{p, q}^{-1/p'+\eta}(]0, t[)}\nonumber\\
   &\leq 
     C_T
     \big(\Vert \mathcal{H}_{t}
   (u)
   - \mathcal{H}_{t}
   (u_0)
   \Vert_{B_{p, q}^{-1/p'+\eta}
(]0, t[)}
   +
   \Vert \mathcal{H}_{t}
   (u_0)
   \Vert_{B_{p, q}^{-1/p'+\eta}
(]0, t[)}\big)\nonumber\\
&\leq C_T\big(\Vert u - u_0
   \Vert_{B_{p, q}^{1/p+\alpha}
(]0, t[)} +1\big)
\nonumber\\
&\leq C_T\big(\epsilon \Vert u - u_0
   \Vert_{B_{p, q}^{1/p+\eta}
(]0, t[)} +A_{\epsilon}\Vert u - u_0
   \Vert_{L^p(]0, t[)}+1\big)
\label{arnaurd}
  \end{align}
  We do not prove that the constant $A_{\epsilon}$
  is independent of $t$. This can be done by 
  using the operators $Q_t$ (see proof of lemma 
  $\ref{bfmwc}$). 
  Hiding the term $C_T\epsilon \Vert u - u_0
   \Vert_{B_{p, q}^{1/p+\eta}(]0, t[)}$ in the left hand side
   of $\ref{arnaurd}$, and
   using  $B_{p, q}^{1/p+\eta}(]0, t[)
   \inj L^{\infty}(]0, t[)$ 
   (equicontinuous family of embeddings) we get
    \begin{align}
     \Vert
     u(t)
     \Vert_E
     \leq C \Vert u
   \Vert_{B_{p, q}^{1/p+\eta}
(]0, t[)}
     \leq C_T\big(\Vert
     u
     \Vert_{L^p(]0, t[)}+1\big)\label{tonyo}
    \end{align}
Set $z(t) = \Vert u
     \Vert_{L^p(]0, t[)}$.
     Inequality $\ref{tonyo}$ implies $z'(t)
     \leq C_T(z(t)+1)$. By Gronwall lemma, it follows
     that $z$ is bounded on
     $[0, t_0[$. Invoking one again 
     $\ref{tonyo}$, we get that 
     $\Vert u
   \Vert_{B_{p, q}^{1/p+\eta}
(]0, t[)}$ is bounded on $[0, t_0[$
 
 %We have used the fact that, for 
  %$0<\sigma< s$ and any
%$\epsilon> 0$, there exists 
%$C_{\epsilon}> 0$ independent of $\Omega$ such that,
%for any $u\in B_{p, q}^{s}(\Omega , E)$:
%\begin{align}\label{xxa}
%\Vert u \Vert_{B_{p, q}^{\sigma}(\Omega , E)}
%\leq 
%\gamma\Vert u \Vert_{B_{p, q}^{s}(\Omega , E)}
%+ C_{\gamma, \Omega}\Vert u 
%\Vert_{L^{p}(\Omega, E)}
%\end{align}
%The proof of this  follows from the case
%$\Omega = \mathbb{R}$ and, in the case 
%$]0, t[$, the use of the operator
%$Q_t$ constructed in the proof of lemma 
%$\ref{bfmwc}$.
\end{proof}

%%%%%%%%%%%%%%%%%%%%%%%%%%%%%%%%%%%%%%%%
%%%%%%%%%%%%%%%%%%%
\section{\large{Examples.}}
\label{xample}
%%%%%%%%%%%%%%%%%%%%%%%%
%%%%%%%%%%%%%%%%%%%%%%%%%
Examples of operators $\mathcal{H}_{T}: 
B_{p, q}^{1/p+\alpha}
(]0, T[, E)
 \rightarrow  B_{p, q}^{-\frac{1}{p'}+
\eta}(]0, T[, E)$ endowed with properties
$L1$ and $L2$ can be searched by means of 
Fourier series
$$
\mathcal{H}_{T}(u) = 
\sum_{j\in \mathbb{Z}}
c_j(u) e_j
$$
 with $e_j(t) = exp(2ij\pi t/T)$
 and where the $c_j: B_{p, q}^{1/p+\alpha}
(]0, T[, E)\rightarrow \mathbb{C}$ are continuous
functions.
Nevertheless, property $L2$ is 
not easily characterized, and
other decompositions must be looked for.
See examples c) and d) below. We will need the following
%Moreover,
%the $e_p$ are so smooth that the $\mathcal{H}_{T}$
%written that way tends to operate in numerous
%functional spaces, with (low and) high index of regularity. 

%$u_0\in E$.
%%%%%%%%%%%%%%%%%%%
%%%%%%%%%%%%%%%%%%%%%%%%%%
%%%%%%%%%%%%%%%%%%%%%%%
\begin{lemma}\label{para}
Assume that 
$1 \leq p, q \leq \infty$, $-1/p'< \sigma \leq 1/p$,   
$1/p < s \leq 1/p+1$ with $\sigma + s> 0$
Then, for any open interval $I$,
and any 
$\psi\in B_{p, q}^{\sigma}(I, 
\mathscr{L}(F, E))$,
$u \in
B_{p, q}^{s}(I, F)$, we have
\beg\label{crisp}
\Vert \psi u
\Vert_{B_{p, q}^{\sigma}(I, 
E)}
\leq C
\Vert \psi \Vert_{B_{p, q}^
{\sigma}(I,
\mathscr{L}(F, E))}
\Vert u  \Vert_{B_{p, q}^{s}(I,
F)} 
\end{align}
%The constant $C > 0$ depends 
%on $\eta$, $\alpha$, $\uu{p}$, $\uu{q}$ but not on 
%$\mathrm{I}$.
\end{lemma}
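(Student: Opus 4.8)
I need to prove Lemma 5.1 (label `para`): for a product $\psi u$ where $\psi \in B_{p,q}^\sigma(I, \mathscr{L}(F,E))$ and $u \in B_{p,q}^s(I, F)$, with $-1/p' < \sigma \le 1/p$, $1/p < s \le 1/p+1$, and $\sigma + s > 0$, we have the bound $\|\psi u\|_{B_{p,q}^\sigma} \le C \|\psi\|_{B_{p,q}^\sigma} \|u\|_{B_{p,q}^s}$.

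**Strategy: Bony decomposition.** The natural tool here, which the paper has set up explicitly (notation 5, the paraproduct $\Pi$ and remainder $\mathfrak{R}$), is Bony's decomposition:
$$\psi u = \Pi(\psi, u) + \Pi(u, \psi) + \mathfrak{R}(\psi, u).$$

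I should bound each of the three pieces landing in $B_{p,q}^\sigma$. Let me think about what's needed.

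**Setting up on $\mathbb{R}$.** Since Besov spaces on an interval $I$ are defined by restriction (notation 7), I should work on $\mathbb{R}$. Take extensions $\tilde\psi$ of $\psi$ and $\tilde u$ of $u$ to $\mathbb{R}$ with norms close to the infimum, apply the product estimate on $\mathbb{R}$, then restrict (restriction is continuous with norm $\le 1$ by the quotient-norm definition). So it suffices to prove the estimate on $\mathbb{R}$.

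**The three paraproduct terms.** Now I use the standard continuity results for the paraproduct and remainder, which the paper says (notation 5) we may use freely.

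Let me check each term against the target regularity $\sigma$:

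1. **$\Pi(\psi, u)$** (low frequencies of $\psi$ times high of $u$): The standard rule gives $\Pi: B_{\infty}^{a} \times B_{p,q}^{b} \to B_{p,q}^{a+b}$ when $a < 0$. Here I'd use the embedding $B_{p,q}^\sigma \hookrightarrow B_{\infty,q}^{\sigma - 1/p}$ (since $\sigma - 1/p \le 0$), so $\psi$ has regularity $\sigma - 1/p$. Paired with $u$ at regularity $s$:
$$\Pi(\psi, u): B_{\infty,q}^{\sigma-1/p} \times B_{p,q}^{s} \to B_{p,q}^{\sigma - 1/p + s}.$$
Since $s \ge 1/p$, we get $\sigma - 1/p + s \ge \sigma$, and the higher-regularity space embeds into $B_{p,q}^\sigma$. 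The condition $\sigma - 1/p < 0$ is what makes this paraproduct rule valid.

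2. **$\Pi(u, \psi)$** (low of $u$ times high of $\psi$): Here $u$ sits at low frequency. Use $u \in B_{p,q}^s \hookrightarrow B_{\infty,\infty}^{s-1/p}$ with $s - 1/p \ge 0$. A paraproduct with a nonnegative-regularity low factor: one bounds $\Pi(u,\psi): L^\infty \times B_{p,q}^\sigma \to B_{p,q}^\sigma$, or uses $S_{j-1}u$ bounded in $L^\infty$ via $s > 1/p$ (so $u$ is in $L^\infty$ uniformly). This lands directly in $B_{p,q}^\sigma$. This mirrors exactly the estimate (\ref{raklet}) already used in the proof of Theorem 4.3b, with $\Pi: B_{\infty,\infty}^{0^+} \times B_{p,q}^\sigma \to B_{p,q}^\sigma$.

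3. **$\mathfrak{R}(\psi, u)$** (the remainder): The remainder rule requires the sum of regularities to be positive. Here $\sigma + s > 0$ is precisely the hypothesis. One gets
$$\mathfrak{R}: B_{p,q}^\sigma \times B_{\infty}^{s-1/p} \to B_{p,q}^{\sigma + s - 1/p},$$
or, paralleling (\ref{baugar't}), map into $B_{1,q}^{\sigma+s}$ and embed. Since $\sigma + s - 1/p \ge \sigma$ (as $s \ge 1/p$) and $\sigma + s > 0$ guarantees the remainder converges, this lands in $B_{p,q}^\sigma$.

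**The main obstacle.** The hard part is \emph{tracking the index conditions} and matching each Bony piece to the exact continuity rule so that every term lands in $B_{p,q}^\sigma$ — in particular, verifying that the three hypotheses $-1/p' < \sigma \le 1/p$ (equivalently $\sigma - 1/p \le 0$), $s > 1/p$ (giving the $L^\infty$ control and nonnegative differential dimension of $u$), and $\sigma + s > 0$ (for remainder convergence) are \emph{each} used, and used sharply. The endpoint cases ($\sigma = 1/p$, $s = 1/p+1$) may require the $q$-index bookkeeping to be done carefully, but the microscopic index plays little role, as the paper notes. Once the three rules are lined up, summing the three bounds and taking infima over extensions on $\mathbb{R}$ yields (\ref{crisp}) immediately. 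The whole argument is essentially a repackaging of estimates (\ref{driklet})--(\ref{baugar't}) already carried out in Theorem \ref{chix}, so I expect it to be short.
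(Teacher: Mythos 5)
Your reduction to $I=\mathbb{R}$ and your two paraproduct estimates coincide exactly with the paper's proof ($\Pi(u,\psi)$ via $L^{\infty}\times B_{p,q}^{\sigma}\to B_{p,q}^{\sigma}$, and $\Pi(\psi,u)$ via $B_{\infty,q}^{\sigma-1/p}\times B_{p,\infty}^{s}\to B_{p,q}^{\sigma+s-1/p}\hookrightarrow B_{p,q}^{\sigma}$). The genuine gap is in the remainder term, which is precisely where the hypothesis $\sigma+s>0$ must be used. You first degrade $u$ through $B_{p,q}^{s}\hookrightarrow B_{\infty,\infty}^{s-1/p}$ and then invoke $\mathfrak{R}\colon B_{p,q}^{\sigma}\times B_{\infty,\infty}^{s-1/p}\to B_{p,q}^{\sigma+s-1/p}$, asserting that ``$\sigma+s>0$ guarantees the remainder converges.'' But once $u$ has been degraded, the sum of regularities governing the remainder is $\sigma+(s-1/p)$, not $\sigma+s$; your rule requires $\sigma+s>1/p$, which is strictly stronger than the hypothesis. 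Concretely, $p=4$, $\sigma=-1/5$, $s=3/10$ satisfies every hypothesis of the lemma ($\sigma+s=1/10>0$), yet $\sigma+s-1/p=-3/20<0$, so the displayed rule simply does not apply. Your fallback (``map into $B_{1,q}^{\sigma+s}$ paralleling (\ref{baugar't})'') fares no better there: to land in $B_{1,q}$ by H\"older you must put $u$ in a $B_{p',\cdot}$ space, and the embedding $B_{p,q}^{s}\hookrightarrow B_{p',\infty}^{s-1/p+1/p'}$ exists only when $p\le p'$, i.e.\ $p\le 2$ (and even then the target is $B_{1,q}^{\sigma+s-1/p+1/p'}$, not $B_{1,q}^{\sigma+s}$). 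Note also that for $p\le 2$ the condition $\sigma+s>0$ is automatic from $\sigma>-1/p'$ and $s>1/p$, so the range your argument actually covers is exactly the range in which the extra hypothesis is vacuous.

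The paper closes this hole with a case split that you are missing. For $p\ge 2$ it keeps \emph{both} factors at integrability $p$ and uses H\"older (legitimate since $2/p\le 1$): $\mathfrak{R}\colon B_{p,q}^{\sigma}\times B_{p,q}^{s}\to B_{p/2,\infty}^{\sigma+s}$, which is where $\sigma+s>0$ genuinely enters; only afterwards does it trade integrability back for regularity, $B_{p/2,\infty}^{\sigma+s}\hookrightarrow B_{p,\infty}^{\sigma+s-1/p}\hookrightarrow B_{p,q}^{\sigma}$, using $s>1/p$. For $1\le p\le 2$ it embeds $B_{p,q}^{s}\hookrightarrow B_{p',\infty}^{s-1/p+1/p'}$ and gets $\mathfrak{R}$ into $B_{1,q}^{\sigma+s-1/p+1/p'}$, the exponent being positive because $s-1/p>0$ and $\sigma+1/p'>0$ separately. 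The idea you missed is that degrading $u$ to an $L^{\infty}$-based space \emph{before} taking the remainder throws away integrability information that the hypothesis $\sigma+s>0$ is designed to exploit; the remainder must be taken at finite integrability, with the Sobolev-type embedding applied only afterwards.
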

\begin{proof}
We only treat the case $I=\mathbb{R}$. 
For the remainder term, assume first that 
$p \geq 2$. Then
\begin{align}
 B_{p, q}^{\sigma} \times 
B_{p, q}^{s} 
\xrightarrow{\mathfrak{R}}
B_{p/2, \infty}^{\sigma+s}
\inj
B_{p, q}^{\sigma+s-1/p}
\inj 
B_{p, q}^{\sigma}\nonumber
\end{align}
Since $s+\sigma > 0$. And for $1 \leq p\leq 2$
\begin{align}
B_{p, q}^{\sigma} \times 
B_{p, q}^{s} 
\inj
B_{p, q}^{\sigma}\times 
B_{p', \infty}^{s - 1/p+1/p'}
&\xrightarrow{\mathfrak{R}}
B_{1, q}^{s-1/p+1/p'+\sigma}
\inj
B_{p, q}^{s-1/p+\sigma}
\inj
B_{p, q}^{\sigma}\nonumber
\end{align}
since $(s-1/p)+(1/p'+\sigma) >0$. And for the paraproducts  
(see \cite{BCD},
p.103):
$$
B_{p, q}^{s}
\times
B_{p, q}^{\sigma}
\inj
\mathrm{L}^{\infty}\times B_{p, q}^{\sigma}
\xrightarrow{\Pi}
B_{p, q}^{\sigma}
$$
$$
B_{p, q}^{\sigma} \times 
B_{p, q}^{s}
\inj
B_{\infty, q}^{\sigma -1/p}
\times B_{p, \infty}^{s}
\xrightarrow{\Pi}
B_{p, q}^{s-1/p+\sigma }
\inj
B_{p, q}^{\sigma}$$
since $\sigma-1/p<0$ and $s-1/p>0$.
\end{proof}

%%%%%%%%%%%%%%%%
%%%%%%%%%%%%%%%%
%%%%%%%%%%%%%%%%
In particular, the product
is well defined and continuous in 
\begin{align}
  B_{\infty, \infty}^{s}(]0, T[, \mathbb{R})
  \times  B_{\infty, \infty}^{\sigma}(]0, T[, \mathbb{R})
  \rightarrow
   B_{\infty, \infty}^{\sigma}(]0, T[, 
   \mathbb{R})\label{pline}
\end{align}
for $s>1/2$ and $\sigma>-1/2$.

We now proceed with the examples. In the sequel, we restrict our
use of theorem $\ref{terminable}$ to
the initial range of values
$1\leq p, q \leq \infty$ and 
$0<\alpha < \beta < 1$.

a) (Cauchy-Lipschitz, see \cite{arn}, \cite{DD}) 
Let $\Omega$ be an open subset of
$E$ and $f: \Omega \rightarrow E$
be a Lipschitz function. Define an operator
$$\mathcal{H}_T: 
 U\subset
  B_{p, q}^{1/p
 +
\alpha}(]0, T[, E)
 \rightarrow  B_{p, q}^{1/p
 +
\alpha}(]0, T[, E)$$
by $\mathcal{H}_T(u) = f(u)$.
Here, $U:=\{v\in
B_{p, q}^{1/p
 +
\alpha}(]0, T[, E)
\textrm{ s.t }
v([0, T])\subset\Omega
\}$. 
Operator $\mathcal{H}_T$ satisfies properties
$L1$ and $L2$. Hence, problem
$\ref{seaal}$ is locally
well posed
in  
$B_{p, q}^{1/p
 +
\alpha}$. This solution belongs to   
$W^{2, \infty}$ and is unique in this 
space.\\

b) Recall that for 
$0<\beta < 1$, 
$B_{\infty, \infty}^{\beta}(]0, T[)$
is the space of bounded Holder functions with exponent $\beta$. 

In this b), we consider an
operator of the form 
$\mathcal{H}_{T}(u)=A(u)D^{\beta_{}}u$, where
$D^{\beta}$ is either the Riemann-Liouville
either the Caputo fractional derivative,
which we now define.

Let $T>0$, $0<\beta<1$, and let $u\in L^{\infty}(]0, T[,
\mathbb{R})$.  We set $$(J^{1-\beta}u)(t) = 
\int_0^t (t-s)^{-\beta}u(s)ds$$
$0< t \leq T$.
For $u\in W^{s, \infty}(]0, T[,
\mathbb{R})$ ($1/2< s\leq 1$),
define the Caputo derivative of order 
$\beta$ as 
\begin{align}
D^{\beta}_{c}u =
\big(J^{1-\beta} (u-u(0))\big)'/\Gamma(1-\beta)\label{caputo}
\end{align}
%%%%\label{caputo}
%%%%%\end{align}
%Write $A^{\beta}u(t) = 
%\int_0^t [u(s)-u(0)]/(t-s)^{\beta}}ds$.
%and $D^{\beta}_{r}u = D^{\beta}_{c}u
%+ 1/t^{\beta}$.
Let $0< t < t+h < T$. For 
$u\in L^{\infty}(]0, T[, \mathbb{R})$, we have 
\begin{align}
&\vert
J^{1-\beta}u(t+h)-
J^{1-\beta}u(t)
\vert\nonumber\\
&\leq
\Vert
u
\Vert_{L^{\infty}}
\int_0^t
(
(t-s)^{-\beta}
-
(t+h-s)^{-\beta}
)ds
+
\Vert
u
\Vert_{L^{\infty}}
\int_t^{t+h}
(t+h-s)^{-\beta}
ds\nonumber\\
&\leq
C_T
\Vert
u
\Vert_{L^{\infty}}\vert h \vert^{1-\beta}\label{prosit}
\end{align}
and similarly for $h<0$. It follows that
\begin{align}
J^{1-\beta}: L^{\infty}(]0, T[)
\rightarrow
B_{\infty, \infty}^{1-\beta}(]0, T[, \mathbb{R})\label{kontinue}
\end{align}
continuously. 
Notice now that, for any
$u\in W^{1, \infty}(]0, T[,
\mathbb{R})$, we have
$D^{\beta}_{c}u(t) = 
\int_0^t u'(s)/(t-s)^{\beta}ds$. Hence, 
$\ref{kontinue}$ provides
$$
D^{\beta}_{c}:
W^{1, \infty}(]0, T[, \mathbb{R})
\rightarrow
B_{\infty, \infty}^{1-\beta}(]0, T[, \mathbb{R})
$$
continuously. Similarly, for
$u\in W^{s, \infty}(]0, T[, \mathbb{R})$,
$1/2< s< 1$, 
using formulas
$\ref{caputo}$ and $\ref{kontinue}$, we obtain
 $$
D^{\beta}_{c}:
W^{s, \infty}(]0, T[, \mathbb{R})
\rightarrow
B_{\infty, \infty}^{-\beta}(]0, T[, \mathbb{R})
$$
continuously. By standard embeddings and real interpolation
formulas (see 
\cite{trie1} p. 204) we get, for $\epsilon > 0$
small enough
 $$
D^{\beta}_{c}:
B^{1-\epsilon}_{\infty, \infty}(]0, T[, \mathbb{R})
\rightarrow
B_{\infty, \infty}^{-\epsilon/2}(]0, T[, \mathbb{R})
$$
continuously. 

We now work with vector valued fuctions, for which
the above notations and results can readily
be extended. Until the end of this b), $n\in \mathbb{N}^*$ 
and $A\in W^{1, \infty}\big(\mathbb{R}^n,  
\mathbf{M}_{n\times n}(\mathbb{R})\big)$ are fixed.
For 
$0< \beta_j< 1$ $(1\leq j \leq n)$ and 
$u\in W^{s, \infty}(]0, T[, \mathbb{R}^n)$, $1/2< s< 1$,
we write
$D^{\beta}_{c}u  = (D^{\beta_j}_{c}u_j)_{1\leq j\leq n}$.
In the sequel, $\beta^* = sup(\beta_1, ..., \beta_n)$.
 Using $\ref{pline}$,
we can define, for $\epsilon > 0$
small enough
$$
\mathcal{H}_{T}:
B^{1-\epsilon}_{\infty, \infty}(]0, T[, \mathbb{R}^n)
\rightarrow
B_{\infty, \infty}^{-\epsilon/2}(]0, T[, \mathbb{R}^n)
$$
by
$
\mathcal{H}_{T}(u) = A(u)D^{\beta}_{c}u$.
Operator $
\mathcal{H}_{T}$ satifies properties 
$L1$ and $L2$. Hence, under condition 
$0< \beta_j< 1$, problem $\ref{seaal}$ admits a unique solution 
in $B^{1-\epsilon}_{\infty, \infty}(]0, t_0[, \mathbb{R}^n)$.
This solution belongs to 
$B^{2-\beta^* }_{\infty, \infty}(]0, t_0[, \mathbb{R}^n)$.

Similarly, the Riemann-Liouville derivative is given, 
for $u\in L^{\infty}(]0, T[, \mathbb{R})$, by 
\begin{align}
D^{\beta}_{r}u =
\big(J^{1-\beta}u\big)'/\Gamma(1-\beta)\label{rl}
\end{align}
Using $\ref{kontinue}$ we get, for $1/2<s\leq 1$
\begin{align}
D^{\beta}_{r}:
B^{s}_{\infty, \infty}(]0, T[, \mathbb{R})
\rightarrow
B_{\infty, \infty}^{-\beta}(]0, T[, \mathbb{R})\label{loupe}
\end{align}
continuously. Coming back to the vector-valued case, 
let $0< \beta_j <1/2$  and let
$\epsilon>0$ such that
$1/2<1-\beta_j-\epsilon<1$ $(1\leq j \leq n)$. Using 
$\ref{loupe}$ and $\ref{pline}$, 
we get that 
$$
\mathcal{H}_T:
B^{1-\beta^* -\epsilon}_{\infty, \infty}(]0, T[, \mathbb{R}^n)
\rightarrow
B_{\infty, \infty}^{-\beta^*}(]0, T[, \mathbb{R}^n)
$$
with $\mathcal{H}_T(u) = A(u) D_{r}^{\beta}u$ is well defined.
Operator $
\mathcal{H}_{T}$ satifies properties 
$L1$ and $L2$.  Therefore, under condition 
$0< \beta_j< 1/2$, problem 
$\ref{seaal}$ is locally
well posed, with a solution $u\in B^{1-\beta^*}_{\infty, \infty}
(]0, t_0[)$. 

c) We assume here that $1 < p < \infty$. In this c), 
we abusively write 
$\chi_{]s, T[}$ in place of 
$\chi_{]s, T[}\vert_{]0, T[}$.

Let $\kappa\in C^0\big([0, T], B_{p, q}^{-1/p'+\eta}
(]0, T[, \mathbb{R})\big)$. Our goal is to give a meaning
to the formula $\mathcal{H}_T(u)(t) = 
\int_0^tu(s)\kappa(s, t)ds$
for $u\in L^{\infty}(]0, T[, \mathbb{R})$. 
Let 
$\phi \in 
B_{p', q'}^{1/p'-\eta}(]0, T[, \mathbb{R})
$
and $s\in]0, T[$. We have
\begin{align}
\vert
<\kappa(s), \chi_{]s, T[}\phi>
\vert
&\leq
C_T \Vert
\kappa(s)
\Vert_{B_{p, q}^{-1/p'+\eta}(]0, T[)}
\Vert
%\mathbf{1}_{]s, T[}
\phi
\Vert_{B_{p', q'}^{1/p'-\eta}(]0, T[)}\label{xhk}
\end{align}
using the continuity of the bracket and the fact 
that 
$\chi_{]s, T[}$ is a multiplier for
$B_{p', q'}^{1/p'-\eta}(]0, T[)$. Hence
\begin{align}
 \vert
 \int_0^T u(s)<\kappa(s), 
 \chi_{]s, T[}\phi>ds
 \vert
 \leq C_T \Vert u \Vert_{L^{\infty}}
 \Vert \kappa
 \Vert_{C^0([0, T], B_{p, q}^{-1/p'+\eta})}
  \Vert \phi
 \Vert_{B_{p', q'}^{1/p'-\eta}}\label{narf}
\end{align}
By theorem 11 in \cite{mu} and the fact that 
$B_{p', q', [0, T]}^{1/p'-\eta}(\mathbb{R}) = 
B_{p', q'}^{1/p'-\eta}(]0, T[)$ within the range of
indexes $-1/p < 1/p' - \eta < 1/p'$, 
we conclude that 
$$
\mathcal{H}_T:
L^{\infty}(]0, T[, \mathbb{R})
\rightarrow
B_{p, q}^{-1/p'+\eta}(]0, T[, \mathbb{R})
$$
with
$
<\mathcal{H}_T(u), \phi>
= 
\int_0^T
u(s)<\kappa(s), 
 \chi_{]s, T[}\phi>ds
$ for any 
$\phi\in B_{p', q'}^{1/p'-\eta}(]0, T[, 
\mathbb{R})$,
is a well defined and continuous operator. Restricted to
$B_{p, q}^{1/p+\alpha}(]0, T[, \mathbb{R})$, 
it satisfies 
properties $L1$ and $L2$. 
%Writing formallythe operator 
%$\mathcal{H}_T(u) = \int_0^tu(s)\kappa(s, t)ds$, 
Due to theorem $\ref{terminable}$ b), we thus have
the existence and uniqueness of a solution
$u\in B_{p, q}^{1/p+\eta}(]0, T[, 
\mathbb{R})$
of problem $u'(t) = \int_0^t u(s)\kappa(s, t)ds$ with 
$u(0) = u_0$. 

This can be generalized to related
operators, for instance 
$\int_0^t f(u(s))\kappa(s, t)ds$, in the scalar or 
Banach-valued case; or operators 
$\int_0^t f\big(m(u)(s)\big)\kappa(s, t)ds$ with 
$m(u)(s) = sup_{0\leq \sigma < s}1/(s-\sigma)
\int_{\sigma}^s
\vert u(z)\vert dz$
etc...

d) Let $\mathcal{A}$ be a Banach algebra. 
Let $sup\{-1/p', -1/2\}< \sigma < 1/p$. 
Let $\psi_j\in B_{p, q}^{\sigma}(]0, T[, 
\mathcal{A})$ and let $f_j: \mathcal{A}\rightarrow
\mathcal{A}$ be given Lipschitz functions
$(j\in \mathbb{Z})$. Consider
$$
\mathcal{H}_T:B_{p, q}^{s}(]0, T[, 
\mathcal{A})
\rightarrow
B_{p, q}^{\sigma}(]0, T[, 
\mathcal{A})
$$
with 
\begin{align}
\mathcal{H}_T(u) = \sum_{j\in \mathbb{Z}}
f_j(u)\psi_j\label{nouille}
\end{align}
(for results on composition operators, see 
\cite{BS}). Due to the hypothesis on $\sigma$, 
one can find $s$ with 
$1/p< s< \sigma+1< 1/p+1$
and $s+\sigma > 0$. For such $s$, 
 lemma $\ref{para}$ applies and the product is continuous in
$B_{p, q}^{s}(]0, T[, 
\mathcal{A})
\times
B_{p, q}^{\sigma}(]0, T[, 
\mathcal{A})
\rightarrow 
B_{p, q}^{s}(]0, T[, 
\mathcal{A})$. Hence, 
$
\mathcal{H}_T$
is well defined, continuous and satisfies
properties L1 and L2 under the (sufficient)
assumption that, for any 
$(u, v) \in 
B_{p, q}^{s}(]0, T[, 
\mathcal{A})^2$ with $u \neq v$, we have
\begin{align}
\sum_{j\in \mathbb{Z}}
\Vert \kappa_j
\Vert_{B_{p, q}^{\sigma}}
\big(
\Vert
f_j(v)-f_j(u)
\Vert_{B_{p, q}^{s}}
/
\Vert
v-u
\Vert_{B_{p, q}^{s}}
+
\Vert
f_j(0)
\Vert_{B_{p, q}^{s}}
\big)
\leq M < \infty\label{7extra}
\end{align}
Definition
$\ref{nouille}$ can be generalized for instance as
 $$\mathcal{H}_{T}(u) = 
 \int_{\mathbb{R}^{n}}
\phi(s, u) \kappa(s) d\mu(s)$$
with $\kappa \in C^0\big(\mathbb{R}^{n}, 
B^{\sigma}_{p,  q}(]0, 
T[, \mathcal{A})\big)$, 
$\phi\in C^0\big(\mathbb{R}^{n}
\times 
B^
{s}_{p, q}
(]0, T[, \mathcal{A}), 
B^
{s}_{p, q}(]0, 
T[, \mathcal{A})\big)$,
$\mu$ a Radon measure on $\mathbb{R}^{\uu{n}}$ and 
with an extra 
condition analogous to $\ref{7extra}$.
 %%%%%%%%%%%%%%%%%%%%%%%%%%%%%%%%%
%%%%%%%%%%%%%%%%%%%%%%%%%%%%%%%%%%\%%%%%%%%%%%%%%%%%%%%

$\bf{Acknowledgements}$. This work was
supported by the LABEX MILYON 
(ANR-10-LABX-0070) of University 
de Lyon, within the program 
"Investissement d'Avenir" (ANR-11-IDEX 
0007) operated by the French National 
Research Agency (ANR).

\end{document}